\newtheorem{thm}{Theorem}[section]
\newtheorem{lem}[thm]{Lemma}
\newtheorem{rem}[thm]{Remark}
\theoremstyle{definition}
\newtheorem{defn}{Definition}[section]
\newcommand{\scr}[1]{\mathscr #1}
\definecolor{wco}{rgb}{0.5,0.2,0.3}
\numberwithin{equation}{section} \theoremstyle{remark}
\newcommand{\ua}{\uparrow}
\title{{\bf
  Coupling by Change of Measure for Conditional McKean-Vlasov SDEs and Applications}\footnote{Supported in
 part by  National Key R\&D Program of China (No. 2022YFA1006000) and NNSFC (12271398).} }
\author{
{\bf   Xing Huang  }\\
\footnotesize{ Center for Applied Mathematics, Tianjin
University, Tianjin 300072, China}\\
\footnotesize{  xinghuang@tju.edu.cn}}
\begin{document}
\allowdisplaybreaks
\def\R{\mathbb R}  \def\ff{\frac} \def\ss{\sqrt} \def\B{\mathbf
B} \def\W{\mathbb W}
\def\N{\mathbb N} \def\kk{\kappa} \def\m{{\bf m}}
\def\ee{\varepsilon}\def\ddd{D^*}
\def\dd{\delta} \def\DD{\Delta} \def\vv{\varepsilon} \def\rr{\rho}
\def\<{\langle} \def\>{\rangle} \def\GG{\Gamma} \def\gg{\gamma}
  \def\nn{\nabla} \def\pp{\partial} \def\E{\mathbb E}
\def\d{\text{\rm{d}}} \def\bb{\beta} \def\aa{\alpha} \def\D{\scr D}
  \def\si{\sigma} \def\ess{\text{\rm{ess}}}
\def\beg{\begin} \def\beq{\begin{equation}}  \def\F{\scr F}
\def\Ric{\text{\rm{Ric}}} \def\Hess{\text{\rm{Hess}}}
\def\e{\text{\rm{e}}} \def\ua{\underline a} \def\OO{\Omega}  \def\oo{\omega}
 \def\tt{\tilde} \def\Ric{\text{\rm{Ric}}}
\def\cut{\text{\rm{cut}}} \def\P{\mathbb P} \def\ifn{I_n(f^{\bigotimes n})}
\def\C{\scr C}      \def\aaa{\mathbf{r}}     \def\r{r}
\def\gap{\text{\rm{gap}}} \def\prr{\pi_{{\bf m},\varrho}}  \def\r{\mathbf r}
\def\Z{\mathbb Z} \def\vrr{\varrho}
\def\L{\scr L}\def\Tt{\tt} \def\TT{\tt}\def\II{\mathbb I}
\def\i{{\rm in}}\def\Sect{{\rm Sect}}  \def\H{\mathbb H}
\def\M{\scr M}\def\Q{\mathbb Q} \def\texto{\text{o}}
\def\Rank{{\rm Rank}} \def\B{\scr B} \def\i{{\rm i}} \def\HR{\hat{\R}^d}
\def\to{\rightarrow}\def\l{\ell}\def\iint{\int}
\def\EE{\scr E}\def\Cut{{\rm Cut}}
\def\A{\scr A} \def\Lip{{\rm Lip}}
\def\BB{\scr B}\def\Ent{{\rm Ent}}\def\L{\scr L}
\def\R{\mathbb R}  \def\ff{\frac} \def\ss{\sqrt} \def\B{\mathbf
B}
\def\N{\mathbb N} \def\kk{\kappa} \def\m{{\bf m}}
\def\dd{\delta} \def\DD{\Delta} \def\vv{\varepsilon} \def\rr{\rho}
\def\<{\langle} \def\>{\rangle} \def\GG{\Gamma} \def\gg{\gamma}
  \def\nn{\nabla} \def\pp{\partial} \def\E{\mathbb E}
\def\d{\text{\rm{d}}} \def\bb{\beta} \def\aa{\alpha} \def\D{\scr D}
  \def\si{\sigma} \def\ess{\text{\rm{ess}}}
\def\beg{\begin} \def\beq{\begin{equation}}  \def\F{\scr F}
\def\Ric{\text{\rm{Ric}}} \def\Hess{\text{\rm{Hess}}}
\def\e{\text{\rm{e}}} \def\ua{\underline a} \def\OO{\Omega}  \def\oo{\omega}
 \def\tt{\tilde} \def\Ric{\text{\rm{Ric}}}
\def\cut{\text{\rm{cut}}} \def\P{\mathbb P} \def\ifn{I_n(f^{\bigotimes n})}
\def\C{\scr C}      \def\aaa{\mathbf{r}}     \def\r{r}
\def\gap{\text{\rm{gap}}} \def\prr{\pi_{{\bf m},\varrho}}  \def\r{\mathbf r}
\def\Z{\mathbb Z} \def\vrr{\varrho}
\def\L{\scr L}\def\Tt{\tt} \def\TT{\tt}\def\II{\mathbb I}
\def\i{{\rm in}}\def\Sect{{\rm Sect}}  \def\H{\mathbb H}
\def\M{\scr M}\def\Q{\mathbb Q} \def\texto{\text{o}}
\def\Rank{{\rm Rank}} \def\B{\scr B} \def\i{{\rm i}} \def\HR{\hat{\R}^d}
\def\to{\rightarrow}\def\l{\ell}
\def\8{\infty}\def\I{1}\def\U{\scr U} \def\n{{\mathbf n}}
\maketitle

\begin{abstract} The couplings by change of measure are applied to establish log-Harnack inequality(equivalently the entropy-cost estimate) for conditional McKean-Vlasov SDEs and derive the quantitative conditional propagation of chaos in relative entropy for mean field interacting particle system with common noise. For the log-Harnack inequality, two different types of couplings will be constructed for non-degenerate conditional McKean-Vlasov SDEs with multiplicative noise. As to the quantitative conditional propagation of chaos in relative entropy, the initial distribution of interacting particle system is allowed to be singular with that of limit equation. The above results are also extended to conditional distribution dependent stochastic Hamiltonian system.
 \end{abstract}

\noindent
 AMS subject Classification:\  60H10, 60G44.   \\
\noindent
 Keywords: Conditional McKean-Vlasov SDEs, Log-Harnack inequality, Coupling by change of measure, Quantitative conditional propagation of chaos, Relative entropy.
 \vskip 2cm

\section{Introduction}
Distribution dependent stochastic differential equations(SDEs) can be viewed as the limit equation of a single particle in the mean field interacting particle system as the number of particles goes to infinity, see \cite{SZ}. It is applied extensively in mean field games \cite{LL}. It is also called McKean-Vlasov SDE in the literature due to the work in \cite{McKean}. Different from the classical It\^{o}'s stochastic differential equation, the distribution of  McKean-Vlasov SDEs solves nonlinear Fokker-Planck-Kolmogorov equation. When there exists a common noise in the mean field interacting particle system, which is independent of the private noise of all particles, the limit equation of a single particle turns into a conditional distribution dependent SDE, which is called conditional McKean-Vlasov SDE, see \cite{CD}. Moreover, the conditional distribution of the solution with respect to the common noise is a probability measure-valued stochastic process, which solves stochastic nonlinear Fokker-Planck-Kolmogorov equation, see for instance \cite{CD,DSZ}. Compared with the McKean-Vlasov SDEs, there are fewer results on conditional  McKean-Vlasov ones, one can refer to \cite{BSW,BCEH,BLY,CD,CG,HSS,KX,DSZ,SW,STW,W21} for well-posedness, \cite{CG,KX,KT} for the study of stochastic nonlinear Fokker-Planck-Kolmogorov equation and \cite{BSW,BCEH,CFl,ELL,KT,VHP,SW,STW} for conditional propagation of chaos.

Let $\scr P(\R^d)$ be the space  of all  probability measures on $\R^d$ equipped with the weak topology.
For $k\geq 1$, let
$$\scr P_k(\R^d):=\big\{\mu\in \scr P(\R^d):\ \|\mu\|_k:= \mu(|\cdot|^k)^{\ff 1 k}<\infty\big\},$$
which is a Polish space under the $L^k$-Wasserstein distance
$$\W_k(\mu,\nu)= \inf_{\pi\in \C(\mu,\nu)} \bigg(\int_{\R^{d}\times\R^{d}} |x-y|^k \pi(\d x,\d y)\bigg)^{\ff 1 {k}},\ \  \mu,\nu\in \scr P_k(\R^d), $$ where $\C(\mu,\nu)$ is the set of all couplings of $\mu$ and $\nu$.
The relative entropy of two probability measures is defined as
$$\mathrm{Ent}(\nu|\mu)=\left\{
  \begin{array}{ll}
    \nu(\log(\frac{\d \nu}{\d \mu})), & \hbox{$\nu\ll\mu$;} \\
    \infty, & \hbox{otherwise.}
  \end{array}
\right.$$
Fix $T>0$. As in \cite[Section 2.1.3]{CD}, let $(\Omega^i, \scr F^i, (\scr F^i_t)_{t\geq 0},\P^i), i=0,1$ be two complete filtration probability spaces and $(\Omega, \scr F, (\scr F_t)_{t\geq 0},\P)$ be the completion of the product structure generated by them, i.e. $\Omega=\Omega^0\times \Omega^1$, $\scr F$ and $(\scr F_t)_{t\geq 0}$ are the completions of $\F^0\otimes\F^1$ and $(\scr F^0_t\otimes\scr F^1_t)_{t\geq 0}$ with respect to the product measure $\P=\P^0\times\P^1$. $W_t$ is the $d_W$-dimensional Brownian motion on $(\Omega^1, \scr F^1, (\scr F^1_t)_{t\geq 0},\P^1)$ while $B_t$ is the $d_B$-dimensional Brownian motions on $(\Omega^0, \scr F^0, (\scr F^0_t)_{t\geq 0},\P^0)$. Let $\F_t^B=\sigma\{B_s, s\in[0,t]\}\otimes\{\emptyset,\Omega^1\}, t\in[0,T]$. Consider conditional McKean-Vlasov SDEs:
\begin{align}\label{E0}\d  X_t=b_t(X_t,\L_{X_t|\F_t^B})\mathrm{d} t+\sigma_t(X_t,\L_{X_t|\F_t^B})\mathrm{d} W_t+\tilde{\sigma}_t(X_t,\L_{X_t|\F_t^B})\d B_t,
\end{align}
where $\L_{X_t|\F_t^B}$ stands for the regular conditional distribution of $X_t$ with respect to $\F_t^B$, $b:[0,T]\times \R^d\times\scr P(\R^d)\to\R^d$, $\sigma:[0,T]\times \R^d\times\scr P(\R^d)\to\R^d\otimes\R^{d_W}$ and $\tilde{\sigma}:[0,T]\times \R^d\times\scr P(\R^d)\to\R^d\otimes\R^{d_B}$ are measurable and  bounded on bounded set. The noise $B_t$ is usually called the common noise while $W_t$ is the private noise. Throughout the paper, we assume that the initial value $X_0$ is $(\Omega^1, \scr F_0^1)$-measurable. Note that when \eqref{E0} is well-posed, see Definition \ref{DE1} below, \cite[Proposition 2.9, Lemma 2.5]{CD} tells that $\{\L_{X_t|\F_t^B}\}_{t\in[0,T]}$ is a version of the $\{\mathcal{L}^1(X_t)\}_{t\in[0,T]}$ in \cite[(2.6)]{CD}. For more other assumptions on the initial value $X_0$, one can refer to \cite[Remark 2.10]{CD}.
\begin{defn}\label{DE1}
For any $\xi\in L^2(\Omega^1\to\R^d,\F^1_0,\P^1)$, we call a continuous and $\F_t$-adapted process $X_t$ with $\E\sup_{t\in[0,T]}|X_t|^2<\infty$ a solution to \eqref{E0} with initial value $\xi$, if $\L_{X_t|\F_t^B}$ is a continuous, $\F_t^B$-adapted and $\scr P_2(\R^d)$-valued process and it holds
$$\E\int_0^T\{|b_t(X_t,\L_{X_t|\F_t^B})|+|\sigma_t(X_t,\L_{X_t|\F_t^B})|^2+|\tilde{\sigma}_t(X_t,\L_{X_t|\F_t^B})|^2\}\d t<\infty$$
and $\P$-a.s.
$$X_s=\xi+\int_0^sb_t(X_t,\L_{X_t|\F_t^B})\mathrm{d} t+\int_0^s\sigma_t(X_t,\L_{X_t|\F_t^B})\mathrm{d} W_t+\int_0^s\tilde{\sigma}_t(X_t,\L_{X_t|\F_t^B})\d B_t,\ \ s\in[0,T].$$
We call \eqref{E0} is well-posed, if for any $\xi\in L^2(\Omega^1\to\R^d,\F^1_0,\P^1)$, it has a unique solution starting from $\xi$ which will be denoted by $X_t^\xi$ in the sequel.
\end{defn}
When \eqref{E0} has a solution, the conditional time-marginal distribution $\mu_t:=\L_{X_t|\F_t^B}$ solves measured-valued stochastic evolution equation, see \cite{CD,DSZ}, the study of which can be dated to \cite{D}. Since then, it has been intensively investigated. \cite{BCEH} derived the well-posedness of mean reflected forward and backward SDEs and obtained the propagation of chaos in Wasserstein distance for  associated interacting particle
system. Moreover, in the forward case, the conditional mean reflected SDEs and conditional propagation of chaos in Wasserstein distance are also studied; \cite{BLY} studied a systemic risk control problem by the central bank, which stabilizes the interbank system with borrowing and lending activities and the mean field optimal control is shown to satisfy a stochastic Fokker-Planck-Kolmogorov equation driven by the common noise; In \cite{KX}, the uniqueness for the stochastic nonlinear Fokker-Planck-Kolmogorov equation is proved in the class of solutions with squarely integrable density with respect to the Lebesgue measure; In \cite{CG}, the uniqueness is shown  by means of a duality argument to a backward stochastic PDE and \cite{DV} verifies the uniqueness of solutions by a dual method, coupling arguments as well as the Krylov-Rozovskii "variational" approach to SPDE.
In \cite{DSZ}, the  superposition principle and mimicking theorem for  conditional McKean-Vlasov SDE are derived, which establish the correspondence between conditional McKean-Vlasov SDE and stochastic nonlinear  Fokker-Planck-Kolmogorov equation under reasonable condition and also show that the conditional time-marginals of an It\^{o} process can be constructed by those of solution to a conditional McKean-Vlasov SDE with Markovian coefficients. This provides a probability method to investigate stochastic nonlinear  Fokker-Planck-Kolmogorov equation.

In recent years, the study of \eqref{E0} attracts much attention. \cite{HSS} proved that \eqref{E0} is well-posed if \eqref{E0} with $b=0$ is well-posed, $\sigma$ and $\tilde{\sigma}$ are distribution free and
$\sigma^{-1}b$ is bounded and Lipchitz continuous under total variation distance.
In \cite{KT}, the quantitative conditional propagation of chaos in weak convergence is provided, where $\sigma$ and $\tilde{\sigma}$ are distribution free and all the coefficients are regular enough in spatial-measure arguments.
\cite{CFl} proved conditional propagation of chaos in Wasserstein distance when $\sigma=0$, $b(x,\mu)=\mu(f(x-\cdot))$ for some Lipschitz function $f$. \cite{ELL} investigated conditional propagation of chaos for one dimensional SDEs driven by Poisson random measure and common Brownian motion noise, where $\tilde{\sigma}=\sqrt{\mu(f)}$ for some positive Lipschitz function $f$. For moderately interacting particle systems with environmental noise and singular interaction kernel such as the Biot-Savart and repulsive Poisson kernels, \cite{GL} proved that the mollified empirical measures converge in strong
norms to the unique (local) solutions of nonlinear Fokker-Planck-Kolmogorov equations. \cite{W21} studied the well-posedness in the case $\sigma=0$ by constructing image dependent SDE.
In \cite{VHP}, the quantitative conditional propagation of chaos in the sense of Wasserstein distance is studied for  stochastic spatial epidemic model, where the evolution of infection states are driven by the Poisson point processes and the displacement of individuals contains a common noise. One can also refer to \cite{SW, STW} for the (conditional)propagation of chaos for (conditional)McKean-Vlasov SDEs with regime-switching.

The propagation of chaos is a hot topic in the McKean-Vlasov frame($\tilde{\sigma}=0$). The quantitative propagation of chaos in strong sense is studied in \cite{SZ} by using synchronous coupling argument, where the coefficients are assumed to be Lipschitz continuous and the initial value of interacting particle system coincides with that of the limit equation. \cite{BJW,JW,JW1} apply the entropy method to derive the quantitative propagation of chaos in relative entropy with additive noise and singular interaction, for which the initial distribution of interacting particle system is assumed to be absolutely continuous with that of limit equation. In \cite{L21}, the authors give the sharp rate of propagation of chaos for some models such as bounded or uniformly continuous
interaction by BBGKY hierarchy. We should also mention that in \cite{MD},  the (uniform in time)quantitative propagation of chaos for genetic-type interacting particle system approximating model in the sense of relative entropy as well as $\mathbb{L}_\alpha(\alpha\in[1,\infty])$ estimate  and thus in the sense of total variation distance are obtained.

 As far as we know, the regularity estimate of conditional McKean-Vlasov SDEs with respect to the initial value such as the entropy-cost estimate is still open. In this paper, we try to construct the coupling by change of measure for conditional McKean-Vlasov SDEs \eqref{E0}. We will present two different couplings in the case with non-degenerate and multiplicative noise to derive the log-Harnack inequality, which is equivalent to entropy-cost estimate. In the distribution independent case, the log-Harnack inequality associated to a Markov semigroup $P_t$ is formulated as
\begin{equation*} \begin{split}P_t\log f(x)&\le \log P_tf(y)+ c(t) |x-y|^2, f\in \B_b^+(\R^d), t\in (0,T], x,y\in\R^d\end{split}\end{equation*}
for some nonnegative function $c$ with $\lim_{t\to 0}c(t)=\infty$, which implies the gradient-$L^2$ estimate:
$$|\nabla P_tf|^2\leq c(t)P_t|f|^2,\ \ t\in (0,T].$$
In the case of non-degenerate diffusion, it is also equivalent to the gradient-gradient estimate:
$$|\nabla P_tf|^2\leq CP_t|\nabla f|^2,\ \ t\in [0,T]$$
for some constant $C>0$.
One can refer to \cite[Chapter 1]{Wbook} for more details.

 Different from the McKean-Vlasov frame, the conditional distribution with respect to the common noise is a functional of common noise so that we have to overcome essential difficulties produced by this crucial difference. For instance, in the procedure of constructing coupling processes, we usually view the conditional distribution with respect to the common noise as a known functional of common noise so that the common noise need also be fixed. Hence, we can only construct a new private noise in coupling process. Moreover, since the private noise and the common noise are independent, when we calculate the expectation for a functional of $(W,B)$, we can firstly take conditional expectation with respect to the common noise in which the common noise can be viewed as a constant and then use the tower property of conditional expectation to realize this goal.

 We will also investigate the quantitative conditional propagation of chaos in the sense of Wasserstein distance, which together with coupling by change of measure implies the quantitative conditional propagation of chaos in relative entropy. Different from \cite{BJW,JW,JW1}, the initial distribution of interacting particle system is allowed to be singular with that of the limit equation. The main tool is an entropy inequality in \cite[Lemma 2.1]{23RW} as well as Wang's Harnack inequality with power. Furthermore, the associated assertions are derived by the method of coupling by change of measure for the conditional distribution dependent stochastic Hamiltonian system and mean field interacting stochastic Hamiltonian system with common noise.

When the conditional distribution is involved, an inequality is often used:
$$\E\W_2(\L_{\xi|\scr G}, \L_{\eta|\scr G})^2\leq \E\left\{\E(|\xi-\eta|^2|\scr G)\right\} =\E|\xi-\eta|^2$$
for any random variables $\xi,\eta$ with finite second monents and any sub-$\sigma$-algebra $\scr G\subset \F$.
Using Banach's fixed point theorem and repeating the proof of \cite[Proposition 2.11]{CD}, it is standard to obtain Lemma \ref{EEB} below under the following monotonicity condition {\bf(H)}. To save space, we omit the proof. One can also refer to \cite{W21} for the well-posedness under the monotonicity condition of image-dependent SDE, a type of special conditional McKean-Vlasov SDE aforementioned.
When \eqref{E0} is well-posed and for any $\gamma\in\scr P_2(\R^d)$ and any $\xi,\tilde{\xi}\in L^2(\Omega^1\to\R^d,\F^1_0,\P^1)$ with $\L_{\xi}=\L_{\tilde{\xi}}=\gamma$, it holds $\L_{X_t^\xi}=\L_{X_t^{\tilde{\xi}}}$, then we denote $P_t^\ast\gamma=\L_{X_t^\xi}$ and
$$P_tf(\gamma):=\int_{\R^d}f(x)( P_t^\ast\gamma)(\d x),\ \ f\in\scr B_b(\R^d).$$
\begin{enumerate} \item[{\bf (H)}] For any $t\in[0,T]$, $b_t, \sigma_t,\tilde{\sigma}_t$ are continuous in $\R^d\times\scr P_2(\R^d)$. There exists a constant $K\geq 0$  such that
 \begin{align*}
 &\|\sigma_t(x,\mu)-\sigma_t(y,\nu)\|_{HS}^2+\|\tt\si_t (x,\mu)-\tt\si_t (y,\nu)\|_{HS}^2
+2\<b_t(x,\mu)-b_t(y,\nu),x-y\>\\
& \le  K (|x-y|^2+\W_2(\mu,\nu)^2),\ \ t\in [0,T],\ x,y\in\R^d,\ \mu,\nu\in \scr P_2(\R^d).
\end{align*}
\end{enumerate}

\begin{lem}\label{EEB} Assume {\bf (H)}. Then \eqref{E0} is well-posed and  $\L_{X_t^\xi|\F_t^B}=\L_{X_t^{\tilde{\xi}}|\F_t^B}$ for any initial values $\xi,\tilde{\xi}\in L^2(\Omega^1\to\R^d,\F^1_0,\P^1)$ with $\L_{\xi}=\L_{\tilde{\xi}}$. Moreover, there exists a constant $C>0$ such that
\begin{align*}&\E\W_2(\L_{X_s^\xi|\F_s^B}, \L_{X_s^{\tilde{\xi}}|\F_s^B})^2+
\W_2(\L_{X_s^\xi}, \L_{X_s^{\tilde{\xi}}})^2\\
&\qquad\qquad\quad\leq C\W_2(\L_{\xi},\L_{\tilde{\xi}})^2,\ \ s\in[0,T],\ \ \xi,\tilde{\xi}\in L^2(\Omega^1\to\R^d,\F^1_0,\P^1).
\end{align*}
\end{lem}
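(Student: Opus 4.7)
\medskip

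\noindent\textbf{Proof proposal for Lemma \ref{EEB}.}

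The plan is a standard Banach fixed-point argument in a weighted metric on the space of conditional time-marginal processes, followed by a Gr\"onwall-type comparison for stability. First I would fix $T>0$ and work in the space
$$\scr S_T=\Big\{\mu=(\mu_t)_{t\in[0,T]}:\ \mu\ \text{is continuous},\ \F_t^B\text{-adapted},\ \scr P_2(\R^d)\text{-valued},\ \sup_{t\in[0,T]}\E\|\mu_t\|_2^2<\infty\Big\},$$
equipped with the metric $d_\lambda(\mu,\nu)^2=\sup_{t\in[0,T]}\e^{-\lambda t}\E\W_2(\mu_t,\nu_t)^2$ for $\lambda>0$ to be chosen large. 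For fixed $\mu\in\scr S_T$ and initial value $\xi\in L^2(\Omega^1\to\R^d,\F_0^1,\P^1)$, the frozen SDE
$$\d X_t^\mu=b_t(X_t^\mu,\mu_t)\,\d t+\sigma_t(X_t^\mu,\mu_t)\,\d W_t+\tt\sigma_t(X_t^\mu,\mu_t)\,\d B_t,\ \ X_0^\mu=\xi,$$
has uniformly Lipschitz coefficients in $x$ by {\bf(H)}, hence a unique strong $\F_t$-adapted solution with $\E\sup_{t\in[0,T]}|X_t^\mu|^2<\infty$ (standard It\^o theory, using that $\mu$ is $\F_t^B$-adapted so the coefficients are $\F_t$-progressive). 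Define $\Phi(\mu)_t:=\L_{X_t^\mu|\F_t^B}$; continuity and $\scr P_2$-valuedness of $\Phi(\mu)$ follow from the moment bound and the conditional-Wasserstein inequality stated just above the lemma.

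The key contraction estimate is obtained by applying It\^o to $|X_t^\mu-X_t^\nu|^2$ with the same initial value $\xi$ and the same Brownian motions $W,B$; monotonicity in {\bf(H)} yields
$$\E|X_t^\mu-X_t^\nu|^2\le K\int_0^t\bigl(\E|X_s^\mu-X_s^\nu|^2+\E\W_2(\mu_s,\nu_s)^2\bigr)\,\d s.$$
Combining Gr\"onwall with the basic bound $\E\W_2(\Phi(\mu)_t,\Phi(\nu)_t)^2\le\E|X_t^\mu-X_t^\nu|^2$ gives
$$\E\W_2(\Phi(\mu)_t,\Phi(\nu)_t)^2\le K\int_0^t\e^{K(t-s)}\E\W_2(\mu_s,\nu_s)^2\,\d s,$$
and multiplying by $\e^{-\lambda t}$ and taking the sup produces a factor $\frac{K}{\lambda-K}$, which is $<1$ for $\lambda$ large. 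Hence $\Phi$ is a contraction on $(\scr S_T,d_\lambda)$ and its unique fixed point $\mu^{\ast,\xi}$ yields the unique solution $X_t^\xi=X_t^{\mu^{\ast,\xi}}$ of \eqref{E0}.

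For the distribution-only dependence, I would argue that $\Phi(\mu)$ depends on $\xi$ only through $\L_\xi$: conditioning on $\F_T^B$ and treating $B(\omega^0)$ as a frozen path, $X^\mu$ becomes the strong solution of an SDE on $(\Omega^1,\F^1,\P^1)$ driven by $W$ with $\omega^0$-parametrised coefficients, whose regular conditional law at time $t$ depends on $\xi$ only through $\L_\xi=\L_{\xi|\F_t^B}$ (recall $\xi$ is $\F_0^1$-measurable, hence independent of $\F_T^B$). By induction through the Picard iterations defining the fixed point, $\mu^{\ast,\xi}=\mu^{\ast,\tt\xi}$ whenever $\L_\xi=\L_{\tt\xi}$, so $\L_{X_t^\xi|\F_t^B}=\L_{X_t^{\tt\xi}|\F_t^B}$. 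Finally, for the stability estimate, I would pick $\xi,\tt\xi$ realising an optimal $\W_2$-coupling of their laws (enlarging $\F_0^1$ if necessary), apply It\^o to $|X_t^\xi-X_t^{\tt\xi}|^2$ to get
$$\E|X_t^\xi-X_t^{\tt\xi}|^2\le\E|\xi-\tt\xi|^2+K\int_0^t\bigl(\E|X_s^\xi-X_s^{\tt\xi}|^2+\E\W_2(\L_{X_s^\xi|\F_s^B},\L_{X_s^{\tt\xi}|\F_s^B})^2\bigr)\,\d s,$$
use the conditional-Wasserstein inequality to bound the last term by $\E|X_s^\xi-X_s^{\tt\xi}|^2$, apply Gr\"onwall, and then invoke the conditional-Wasserstein inequality once more together with $\W_2(\L_{X_s^\xi},\L_{X_s^{\tt\xi}})^2\le\E|X_s^\xi-X_s^{\tt\xi}|^2$ to deduce the claim with $C=2\e^{2KT}$.

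The main obstacle, in my view, is not any single estimate but the bookkeeping to ensure the fixed-point output $\mu^{\ast,\xi}$ is genuinely independent of the representative $\xi$ of its law; this is where the product-space structure $(\Omega^0\times\Omega^1,\P^0\times\P^1)$ and the independence of $\xi$ from $\F_T^B$ must be used carefully. Everything else reduces to the routine It\^o/Gr\"onwall manipulations already familiar from the unconditional McKean--Vlasov proof in \cite[Proposition 2.11]{CD}.
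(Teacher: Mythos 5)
Your proposal is correct and takes essentially the same route the paper alludes to: the paper explicitly omits the proof, saying it follows by ``Banach's fixed point theorem and repeating the proof of \cite[Proposition 2.11]{CD}'', which is exactly the fixed-point-on-$\F_t^B$-adapted-measure-flows plus It\^o/Gr\"onwall argument you lay out, including the key observation that the fixed point depends on $\xi$ only through $\L_\xi$ because $\xi$ is $\F_0^1$-measurable hence independent of $\F_T^B$.

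One small inaccuracy worth flagging: under \textbf{(H)} the drift $b$ is only \emph{one-sided} Lipschitz (monotone) in $x$, not uniformly Lipschitz, so the well-posedness of the frozen SDE $\d X_t^\mu = b_t(X_t^\mu,\mu_t)\,\d t+\cdots$ should be justified by the standard theory for SDEs with continuous, monotone, locally bounded coefficients rather than by ``uniformly Lipschitz coefficients in $x$''. This does not affect any of the subsequent It\^o/Gr\"onwall estimates, which only ever use the monotonicity form of \textbf{(H)}, and the contraction factor $K/(\lambda-K)$ and the stability constant $C=2\e^{2KT}$ come out exactly as you compute. Your handling of the coupling for the stability estimate (enlarging $\F_0^1$ to realize an optimal $\W_2$-coupling, which is legitimate precisely because the first part of the lemma shows the conditional laws depend on $\xi$ only through $\L_\xi$) is the right bookkeeping.
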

When there are different probability measures on $(\Omega,\F)$, we use $\L^{\P}_{\xi}$ and $\L^{\P}_{\xi|\scr G}$ denote the distribution and regular conditional distribution of random variable $\xi$ with respect to sub-$\sigma$-algebra $\scr G\subset \F$ respectively under probability measure $\P$.

The remaining of the paper is organized as follows: In section 2,  we establish the log-Harnack inequality and thus the entropy-cost estimate for conditional McKean-Vlasov SDEs with non-degenerate and multiplicative noise and two different cases are considered. Moreover, we investigate the quantitative conditional propagation of chaos in Wasserstein distance and relative entropy. The corresponding results are derived in Section 3 for conditional distribution dependent  stochastic Hamiltonian system and mean field interacting stochastic Hamiltonian system with common noise.
\section{Non-degenerate case}
\subsection{Log-Harnack inequality}

To apply the coupling by change of measure to establish the log-Harnack inequality for conditional McKean-Vlasov SDEs, we assume $\sigma$ is distribution free and consider
\begin{align}\label{E1}\d  X_t=b_t(X_t,\L_{X_t|\F_t^B})\mathrm{d} t+\sigma_t(X_t)\mathrm{d} W_t+\tilde{\sigma}_t(X_t,\L_{X_t|\F_t^B})\d B_t.
\end{align}
In the following, we will investigate two different cases and construct corresponding coupling by change of measure to derive the log-Harnack inequality for \eqref{E1}.
\subsubsection{Case 1: $\tilde{\sigma}_t(x,\mu)=\tilde{\sigma}_t(x)$}
\begin{enumerate} \item[{\bf (A)}] For any $t\in[0,T],x\in\R^d$, $(\sigma_t\sigma^{\ast}_t)(x)$  is invertible and $b_t$ is continuous in $\R^d\times\scr P_2(\R^d)$. There exist $\lambda \in(0,1]$ and $K,\tilde{K}\geq 0$  such that
 \begin{align*}
 &\lambda^{-1}\geq(\sigma_t\sigma^{\ast}_t)(x)\geq \lambda,\ \ \|\sigma_t(x)-\sigma_t(y)\|_{HS}^2 \leq K|x-y|^2,\\
&\<b_t(x,\mu)-b_t(y,\nu),x-y\>\le  K (|x-y|\W_2(\mu, \nu)+|x-y|^2),\\
 &\|\tt\si_t (x)-\tt\si_t (y)\|_{HS}^2\leq \tilde{K} |x-y|^2,\ \ t\in [0,T],\ x,y\in\R^d,\ \mu,\nu\in \scr P_2(\R^d).
\end{align*}
\end{enumerate}
\begin{thm}\label{LHA} Assume {\bf (A)}.
Then there exists a constant $c>0$ such that
\begin{equation*}\begin{split}
&P_t\log f(\nu_0) \le \log P_t f(\mu_0)+c\left\{\frac{(3K+\tilde{K})}{1-\e^{-(3K+\tilde{K})t}}+ t\right\}\W_2(\mu_0,\nu_0)^2,\\
&\qquad\qquad\quad \quad\quad\qquad\qquad0<f\in \B_b(\R^d), \mu_0,\nu_0 \in \scr P_2(\R^d), t\in (0,T].\end{split} \end{equation*}
 \end{thm}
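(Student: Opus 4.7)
The plan is to apply Wang's coupling by change of measure, adapted to the conditional McKean--Vlasov framework by exploiting that $\F_s^B$-conditional distributions are functionals of the common noise $B$ alone, and then conclude through the Young-type entropy inequality.

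Fix $t\in(0,T]$ and couple initial data: choose $\xi,\eta\in L^2(\Omega^1,\F_0^1,\P^1)$ with $\L_\xi=\mu_0$, $\L_\eta=\nu_0$ and $\E|\xi-\eta|^2=\W_2(\mu_0,\nu_0)^2$. Let $X_s=X_s^\xi$ solve \eqref{E1} on $[0,t]$ and set $\mu_s:=\L_{X_s|\F_s^B}$, $\nu_s:=\L_{X_s^\eta|\F_s^B}$; both are $\F_s^B$-adapted functionals of $B$ satisfying $\E\W_2^2(\mu_s,\nu_s)\le C\W_2(\mu_0,\nu_0)^2$ by Lemma \ref{EEB}. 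On the same probability space I introduce the auxiliary process
$$dY_s=\Bigl\{b_s(Y_s,\nu_s)+\frac{\alpha(X_s-Y_s)}{1-e^{-\alpha(t-s)}}\Bigr\}\,ds+\sigma_s(Y_s)\,dW_s+\tilde\sigma_s(Y_s)\,dB_s,\ Y_0=\eta,$$
with $\alpha:=3K+\tilde K$. Itô's formula applied to $|X_s-Y_s|^2$ together with condition {\bf(A)} and the $\W_2$ estimate of Lemma \ref{EEB} yields a Gronwall-type inequality whose integrating factor (an explicit combination involving $1-e^{-\alpha(t-s)}$) forces $\E|X_s-Y_s|^2\to 0$ as $s\uparrow t$, so $X_t=Y_t$ $\P$-almost surely.

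Set $h_s:=-\sigma_s^*(Y_s)(\sigma_s\sigma_s^*)^{-1}(Y_s)\cdot\alpha(X_s-Y_s)/(1-e^{-\alpha(t-s)})$ and $R:=\exp(\int_0^t h_s\cdot dW_s-\tfrac12\int_0^t|h_s|^2\,ds)$. Non-degeneracy $\sigma\sigma^*\ge\lambda$ together with the $L^2$ estimate above verifies Novikov's condition, so $R$ is a $\P$-martingale with $\E R=1$. Under $\Q:=R\P$, $\tilde W_s:=W_s-\int_0^s h_r\,dr$ is a $\Q$-Brownian motion, $B$ remains an independent $\Q$-Brownian motion with its $\P$-law, and
$$dY_s=b_s(Y_s,\nu_s)\,ds+\sigma_s(Y_s)\,d\tilde W_s+\tilde\sigma_s(Y_s)\,dB_s,\quad Y_0=\eta.$$
Since $\eta$ is $\F_0$-measurable and $R_0=1$, $\L^\Q_\eta=\nu_0$; and since $\nu_s$ is a deterministic functional of $B$ whose law is unchanged under $\Q$, this $\Q$-equation has the same coefficients, initial law and driving-noise law as the $\P$-equation for $X^\eta$. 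Weak uniqueness inherited from Lemma \ref{EEB} therefore gives $\L^\Q(Y_t)=\L^\P(X_t^\eta)$. Combined with $Y_t=X_t$ $\P$-a.s.\ and the Young entropy inequality,
$$P_t\log f(\nu_0)=\E^\Q\log f(Y_t)=\E^\P[R\log f(X_t)]\le\log P_tf(\mu_0)+\mathrm{Ent}(\Q|\P).$$

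The remaining and principal task is the entropy bound $\mathrm{Ent}(\Q|\P)\le c\bigl\{\frac{3K+\tilde K}{1-e^{-(3K+\tilde K)t}}+t\bigr\}\W_2^2(\mu_0,\nu_0)$. By Girsanov, $\mathrm{Ent}(\Q|\P)=\tfrac12\E^\Q\int_0^t|h_s|^2\,ds\le\frac{\alpha^2}{2\lambda}\E^\Q\int_0^t\frac{|X_s-Y_s|^2}{(1-e^{-\alpha(t-s)})^2}\,ds$. The main obstacle is that under $\Q$ the dynamics of $X-Y$ acquire an extra drift from the Girsanov shift whose magnitude contains the singular factor $(1-e^{-\alpha(t-s)})^{-1}$, threatening to destabilise a naive Gronwall argument. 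The choice $\alpha=3K+\tilde K$ is calibrated so that, after exploiting the Lipschitz bound on $\sigma$ and the $\W_2$ bound of Lemma \ref{EEB}, one obtains $\E^\Q|X_s-Y_s|^2\lesssim(1-e^{-\alpha(t-s)})^2[\W_2^2(\mu_0,\nu_0)+\E|\xi-\eta|^2]$, keeping the integrand bounded. Integrating, the boundary initial data $\E|\xi-\eta|^2$ produces the $\frac{\alpha}{1-e^{-\alpha t}}\W_2^2$ prefactor via a direct substitution, while the $K\W_2^2(\mu_s,\nu_s)$ source contributes the additive $t\W_2^2$. Taking the infimum over admissible couplings $(\xi,\eta)$ yields the stated log-Harnack inequality.
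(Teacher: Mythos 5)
Your overall strategy is the right one — the same coupling by change of measure the paper uses — but the specific construction you chose has a technical flaw that breaks the key Gronwall step, and the paper's construction is engineered precisely to avoid it.

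You take the auxiliary drift on $Y$ to be $\tfrac{X_s-Y_s}{\xi_s}$ (with $\xi_s=\tfrac{1}{\alpha}(1-\e^{-\alpha(t-s)})$) and the Girsanov shift $h_s=-[\sigma_s^*(\sigma_s\sigma_s^*)^{-1}](Y_s)\tfrac{X_s-Y_s}{\xi_s}$. Under $\Q$, the extra drift on $Y$ is $\sigma_s(Y_s)h_s=-\tfrac{X_s-Y_s}{\xi_s}$, which cancels the auxiliary drift cleanly. But the extra drift on $X$ is $\sigma_s(X_s)h_s=-\sigma_s(X_s)[\sigma_s^*(\sigma_s\sigma_s^*)^{-1}](Y_s)\tfrac{X_s-Y_s}{\xi_s}$, and this does \emph{not} reduce to $-\tfrac{X_s-Y_s}{\xi_s}$ unless $\sigma$ is constant. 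Decomposing $\sigma(X)=\sigma(Y)+[\sigma(X)-\sigma(Y)]$ leaves an error drift $-[\sigma(X)-\sigma(Y)][\sigma^*(\sigma\sigma^*)^{-1}](Y)\tfrac{X-Y}{\xi}$, whose contribution to $\tfrac{\d}{\d s}|X_s-Y_s|^2$ is bounded only by a term of order $\tfrac{|X_s-Y_s|^3}{\xi_s}$. This cubic term cannot be absorbed by the Gronwall/Young scheme that your $\alpha=3K+\tilde K$ calibration relies on, and your intermediate claim $\E^\Q|X_s-Y_s|^2\lesssim\xi_s^2[\cdots]$ does not follow. (Note also that the Itô analysis under $\P$ \emph{is} clean for your construction, but the entropy bound you need is $\tfrac12\E^\Q\int_0^t|h_s|^2\,\d s$, and the $\P$-estimate does not transfer.)

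The paper's fix is to insert a matrix factor into the auxiliary drift of $Y$: it takes the auxiliary drift to be $\sigma_s(Y_s)[\sigma_s^*(\sigma_s\sigma_s^*)^{-1}](X_s)\tfrac{X_s-Y_s}{\xi_s}$ and the Girsanov shift to be $\gamma_s=[\sigma_s^*(\sigma_s\sigma_s^*)^{-1}](X_s)\tfrac{Y_s-X_s}{\xi_s}$, with $\sigma^*(\sigma\sigma^*)^{-1}$ evaluated at $X_s$ (not $Y_s$). Then under $\Q$: the shift on $X$ is $\sigma(X)\gamma=\tfrac{Y-X}{\xi}$ exactly (because the $\sigma\sigma^*(\sigma\sigma^*)^{-1}$ factors are all evaluated at $X$), and the shift on $Y$ is $\sigma(Y)\gamma$ which cancels the auxiliary drift exactly. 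Hence both processes have clean dynamics under $\Q$, and the Itô computation \eqref{XYX}--\eqref{XYI} closes without any cubic remnant. Two further points you gloss over: (i) Novikov's condition cannot be checked directly because of the $\xi_s^{-1}$ singularity near $s=t$; the paper instead localizes with $\tau_n$, bounds $\E_{\Q_{s\wedge\tau_n}}\int|\gamma|^2$, deduces uniform integrability and then passes to the limit by martingale convergence and Fatou. (ii) To identify $\L^{\Q}(Y_t)=\L^\P(X_t^{\nu_0})$ one needs to recognize the SDE for $Y$ under $\Q$ as the SDE with the random but $\F^B$-measurable coefficient $\nu_s$ and invoke strong uniqueness for that random-coefficient SDE, together with \cite[Proposition~2.11]{CD} to confirm $\nu_s=\L^{\Q}_{Y_s|\F_s^B}$; this is worth stating explicitly. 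The rest of your argument (conditioning on $B$, using $\E(R_{s\wedge\tau_n}|\F_s^B)=1$ to transfer the $\W_2(\mu_t,\nu_t)$ estimate, and the final Young/entropy inequality) follows the paper's plan.
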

\begin{proof}
Let $X_0^{\mu_0},X_0^{\nu_0}$ be $(\Omega^1,\F^1_0)$-measurable such that
 \beq\label{COU} \L_{X^{\mu_0}_0}=\mu_0,\ \ \L_{X_0^{\nu_0}}=\nu_0,\ \ \E|X_0^{\mu_0}-X_0^{\nu_0}|^2=\W_{2}(\mu_0,\nu_0)^2.\end{equation}
 Let $X_t^{\mu_0}$ and $X_t^{\nu_0}$ solve \eqref{E1} with initial values $X_0^{\mu_0}$ and $X_0^{\nu_0}$ respectively. Denote
 \begin{align}\label{mnu}\nu_t=\L^{\P}_{{X^{\nu_0}_t|\F_t^B}},\ \ \mu_t=\L^{\P}_{{X^{\mu_0}_t|\F_t^B}},\ \ t\in[0,T].
 \end{align}
 Then it holds
 \beq\label{CDM} \begin{split}\d X_t^{\mu_0} &= b_t(X_t^{\mu_0}, \mu_t)\d t + \sigma_t(X_t^{\mu_0}) \d W_t+\tilde{\sigma}_t(X_t^{\mu_0}) \d B_t,\ \ t\in [0,T].
\end{split}\end{equation}
Let $t_0\in(0,T]$ and $\xi_t=\frac{1}{(3K+\tilde{K})}(1-\e^{(3K+\tilde{K})(t-t_0)})$, which satisfies
\begin{align}\label{XXY}-\xi_t'+(3K+\tilde{K})\xi_t=1.
\end{align}
Consider the following SDE:
\beq\label{YPR} \beg{split}&\d Y_t = b_t(Y_t, \nu_t)\d t + \sigma_t(Y_t) \d W_t+\tilde{\sigma}_t(Y_t) \d B_t\\
&\qquad\quad+\sigma_t(Y_t)[\sigma_t^\ast(\sigma_t\sigma_t^\ast)^{-1}] (X_t^{\mu_0})\frac{X_t^{\mu_0}-Y_t}{\xi_t}\d t,\quad \ t\in [0,t_0),\ Y_0=X_0^{\nu_0}.\end{split} \end{equation}
Let $\tau_n=t_0\wedge\inf\{t\in[0,t_0), |X_t^{\mu_0}|\vee|Y_t|\geq n\}$. Then $\P$-a.s. $\tau_n\uparrow t_0$ as $n\uparrow\infty$.
Let
\beq\label{gga}\begin{split} &\gamma_t:=[\sigma_t^\ast(\sigma_t\sigma_t^\ast)^{-1}](X_t^{\mu_0}) \frac{Y_t-X_t^{\mu_0}}{\xi_t},\\
&\hat{W}_t :=   W_t-\int_0^t \gamma_s\d s,\ \ R_t:= \e^{\int_0^{t}\<\gamma_r, \d W _r\> -\ff 1 2 \int_0^{t} |\gamma_r|^2\d r},\\
& \Q_t:= R_t\P, \ \ t\in [0,t_0).
\end{split}\end{equation}
Fix $s\in[0,t_0)$.
According to Girsanov's theorem,  under the weighted probability $\Q_{s\wedge\tau_n},$
$(\hat{W}_{t},B_{t})$ is a $(d_{W}+d_{B})$-dimensional Brownian motion up to time $s\wedge \tau_n$.

Then \eqref{CDM} and \eqref{YPR} can be rewritten as
\begin{equation*} \begin{split}\d X_t^{\mu_0}&= b_t(X_t^{\mu_0}, \mu_t)\d t + \sigma_t(X_t^{\mu_0}) \d \hat{W}_t+\tilde{\sigma}_t(X_t^{\mu_0}) \d B_t+\frac{Y_t-X_t^{\mu_0}}{\xi_t},\ \ t\in [0,s\wedge \tau_n],
\end{split}\end{equation*}
and
 \begin{equation*} \beg{split}&\d Y_t= b_t(Y_t, \nu_t)\d t + \sigma_t(Y_t) \d \hat{W}_t+\tilde{\sigma}_t(Y_t) \d B_t,\quad \ t\in [0,s\wedge \tau_n],\ \ Y_0=X_0^{\nu_0}.\end{split} \end{equation*}
It follows from  It\^{o}'s formula that
\beq\label{XYX}\begin{split} &\d \frac{|Y_t-X_t^{\mu_0}|^2}{\xi_t}\\
&=-\frac{\xi_t'|Y_t-X_t^{\mu_0}|^2}{\xi_t^2}\d t+\frac{2\< b_t(Y_t, \nu_t)- b_t(X_t^{\mu_0}, \mu_t),Y_t-X_t^{\mu_0}\>}{\xi_t}\d t-2\frac{|Y_t-X_t^{\mu_0}|^2}{\xi_t^2} \d t\\
&+\frac{2\<[\sigma_t(Y_t)-\sigma_t(X_t^{\mu_0})]\d \hat{W}_t+[\tilde{\sigma}_t(Y_t)-\tilde{\sigma}_t(X_t^{\mu_0})]\d B_t, Y_t-X_t^{\mu_0}\>}{\xi_t}\\
&+\frac{\|\sigma_t(Y_t)-\sigma_t(X_t^{\mu_0})\|_{HS}^2+ \|\tilde{\sigma}_t(Y_t)-\tilde{\sigma}_t(X_t^{\mu_0})\|_{HS}^2}{\xi_t}\d t,\ \ t\in [0,s\wedge \tau_n].
\end{split}\end{equation}
In view of {\bf(A)}, we conclude that
\begin{align*}
&\frac{2\< b_t(Y_t, \nu_t)- b_t(X_t^{\mu_0}, \mu_t),Y_t-X_t^{\mu_0}\>}{\xi_t}\\
&\leq \frac{2K|Y_t-X_t^{\mu_0}|\W_2(\mu_t,\nu_t)+2K|Y_t-X_t^{\mu_0}|^2}{\xi_t}\\
&\leq \frac{1}{2}\frac{|Y_t-X_t^{\mu_0}|^2}{\xi_t^2}+2K^2\W_2(\mu_t,\nu_t)^2+\frac{2K\xi_t|Y_t-X_t^{\mu_0}|^2}{\xi_t^2},
\end{align*}
and
\begin{align*}
&\frac{\|\sigma_t(Y_t)-\sigma_t(X_t^{\mu_0})\|_{HS}^2+\|\tilde{\sigma}_t(Y_t)-\tilde{\sigma}_t(X_t^{\mu_0})\|_{HS}^2}{\xi_t}\leq \frac{(K+\tilde{K})\xi_t|Y_t-X_t^{\mu_0}|^2}{\xi_t^2}.
\end{align*}
This together with \eqref{XYX} gives
\beq\label{XYI}\begin{split} \d \frac{|Y_t-X_t^{\mu_0}|^2}{\xi_t}&\leq\frac{[-\xi_t'+(3K+\tilde{K})\xi_t-\frac{3}{2}]|Y_t-X_t^{\mu_0}|^2}{\xi_t^2}\d t\\
&+2K^2\W_2(\mu_t,\nu_t)^2\d t+\d M_t,\ \ t\in [0,s\wedge \tau_n],
\end{split}\end{equation}
where
$$\d M_t=\frac{2\<[\sigma_t(Y_t)-\sigma_t(X_t^{\mu_0})]\d \hat{W}_t+[\tilde{\sigma}_t(Y_t)-\tilde{\sigma}_t(X_t^{\mu_0})]\d B_t, Y_t-X_t^{\mu_0}\>}{\xi_t}.$$
Since $W$ is independent of $B$, we have
$$\E(R_{s\wedge\tau_n}|\F_s^B)=1,$$
which together with \eqref{mnu}, the definition of $\mu_t,\nu_t$ and Lemma \ref{EEB} implies
\begin{align*}&\E_{\Q_{s\wedge\tau_n}}\int_0^{s\wedge\tau_n}\W_2(\mu_t,\nu_t)^2\d t\\
&\leq \E\left\{\E(R_{s\wedge\tau_n}|\F_s^B)\int_0^{s}\W_2(\mu_t,\nu_t)^2\d t\right\}\\
&=\E\int_0^{s}\W_2(\mu_t,\nu_t)^2\d t\leq Cs\W_2(\mu_0,\nu_0)^2.
\end{align*}
Combing this with \eqref{XXY} and \eqref{XYI}, we derive
\begin{align*}
&\E_{\Q_{s\wedge\tau_n}}\int_0^{s\wedge\tau_n}\frac{|Y_t-X_t^{\mu_0}|^2}{\xi_t^2}\d t\leq 2\E_{\Q_{s\wedge\tau_n}}\frac{|Y_0-X_0^{\mu_0}|^2}{\xi_0}+\E_{\Q_{s\wedge\tau_n}}\int_0^{s\wedge\tau_n}4K^2\W_2(\mu_t,\nu_t)^2\d t\\
&\leq 2\frac{\W_2(\mu_0,\nu_0)^2}{\xi_0}+ 4K^2Cs\W_2(\mu_0,\nu_0)^2.
\end{align*}
Hence, by \eqref{gga} and {\bf (A)}, we find a constant $c_1>0$ such that
\begin{equation*}\begin{split}  &\E[R_{s\wedge\tau_n}\log R_{s\wedge\tau_n}]=\E_{\Q_{s\wedge\tau_n}}[\log R_{s\wedge\tau_n}]=\frac{1}{2}\E_{\Q_{s\wedge\tau_n}}\int_0^{s\wedge \tau_n} |\gamma_t|^2\d t\\
&\leq c_1\frac{\W_2(\mu_0,\nu_0)^2}{\xi_0}+ c_1s\W_2(\mu_0,\nu_0)^2.\end{split}\end{equation*}
Consequently, $\{R_{s\wedge\tau_n}\}_{n\geq 1}$ is a uniformly integrable martingale under $\P$, which together with the martingale convergence theorem and Fatou's lemma implies that
\beq\label{REN}\begin{split}  &\frac{1}{2}\E_{\Q_{s}}\int_0^{s} |\gamma_t|^2\d t=\E[R_{s}\log R_{s}]\leq c_1\frac{\W_2(\mu_0,\nu_0)^2}{\xi_0}+ c_1s\W_2(\mu_0,\nu_0)^2,\ \ s\in[0,t_0).\end{split}\end{equation}
This means that $\{R_{s}\}_{s\in[0,t_0]}$ is a uniformly integrable martingale under $\P$ and Girsanov's theorem yields that under the weighted probability $\Q_{t_0},$
$(\hat{W}_{t},B_{t})_{t\in[0,t_0]}$ is a $(d_{W}+d_{B})$-dimensional Brownian motion. Moreover,
$\Q_{t_0}$-a.s. $Y_{t_0}=X_{t_0}^{\mu_0}$ by \eqref{REN} for $s=t_0$ due to Fatou's lemma.
On the other hand, consider the conditional McKean-Vlasov SDE
 \beq\label{YHB} \beg{split}&\d \tilde{Y}_t= b_t(\tilde{Y}_t, \L^{\Q_{t_0}}_{\tilde{Y}_t|\F_t^B})\d t + \sigma_t(\tilde{Y}_t) \d \hat{W}_t+\tilde{\sigma}_t(\tilde{Y}_t) \d B_t,\quad \ t\in [0,t_0],\ \ \tilde{Y}_0=X_0^{\nu_0}.\end{split} \end{equation}
According to \cite[Proposition 2.11]{CD}, we derive $\nu_t=\L^{\Q_{t_0}}_{\tilde{Y}_t|\F_t^B}$ and $\L^{\P}_{X^{\nu_0}_t}=\L^{\Q_{t_0}}_{\tilde{Y}_t}$ so that \eqref{YHB} can be rewritten as
 \beq\label{YHC} \beg{split}&\d \tilde{Y}_t= b_t(\tilde{Y}_t, \nu_t)\d t + \sigma_t(\tilde{Y}_t) \d \hat{W}_t+\tilde{\sigma}_t(\tilde{Y}_t) \d B_t,\quad \ t\in [0,t_0], \tilde{Y}_0=X_0^{\nu_0}.\end{split} \end{equation}
The strong uniqueness of \eqref{YHC} implies $Y_t=\tilde{Y}_t,t\in[0,t_0]$. In fact, \eqref{YHC} is an SDE with random coefficients, the well-posedness of which can be proved by standard argument under the assumption {\bf(A)}. Therefore, $\L^{\Q_{t_0}}_{Y_t|\F_t^B}=\L^{\P}_{{X^{\nu_0}_t|\F_t^B}}=\nu_t$ and $\L^{\Q_{t_0}}_{Y_t}=\L^{\P}_{X^{\nu_0}_t}$.
Combining this with Young's  inequality and \eqref{REN} for $s=t_0$ due to Fatou's lemma,  we derive
\beg{align*} &P_{t_0}\log  f(\nu_0)= \E_{\Q_{t_0}} [\log f(Y_{t_0}) ]
 = \E[ R_{t_0}  \log f(X_{t_0}^{\mu_0}) ] \\
 &\le \log \E [ f(X_{t_0}^{\mu_0})] + \E [R_{t_0}\log R_{t_0}] \\
& \leq \log P_{t_0}  f(\mu_0) +c_1\frac{\W_2(\mu_0,\nu_0)^2}{\xi_0}+ c_1t_0\W_2(\mu_0,\nu_0)^2,\ \ 0< f\in \B_b(\R^d).\end{align*}
 Therefore, we complete the proof by the definition of $\xi_0$.
 \end{proof}
\subsubsection{Case 2: $\tilde{\sigma}_t(x,\mu)=\tilde{\sigma}_t(\mu)$}
In the second case, we assume that $\tilde{\sigma}$ only depends on the time-distribution arguments, i.e. consider
  \beq\label{E'} \d X_t= b_t(X_t, \L_{X_t|\F_{t}^B})\d t+  \sigma_t(X_t) \d W_t+ \tt \si_t( \L_{X_t|\F_{t}^B})\d B_t,\ \ t\in [0,T]. \end{equation}
To establish the  log-Harnack inequality, we make   the following Lipschitz assumption instead of {\bf(A)}.
\begin{enumerate} \item[{\bf (B)}] $(\sigma_t\sigma^{\ast}_t)(x)$ is invertible and there exist $\lambda \in(0,1]$ and $K,\tilde{K}\geq 0$  such that
 \begin{align*}
 &\lambda^{-1}\geq(\sigma_t\sigma^{\ast}_t)(x)\geq \lambda,\ \ \|\sigma_t(x)-\sigma_t(y)\|_{HS}^2 \leq K|x-y|^2,\\
&|b_t(x,\mu)-b_t(y,\nu)| \le  K (|x-y|+\W_2(\mu, \nu)),\\
& \|\tt\si_t (\mu)-\tt\si_t (\nu)\|^2\leq \tilde{K} \W_2(\mu, \nu)^2,
\ \  \ t\in [0,T],\ x,y\in\R^d,\ \mu,\nu\in \scr P_2(\R^d).
\end{align*}
\end{enumerate}
Again according to Lemma \ref{EEB}, assumption {\bf (B)} implies Lemma \ref{EEB} holds for SDE \eqref{E'} replacing \eqref{E0}. In the case that $\tilde{\sigma}_t(x,\mu)=\tilde{\sigma}_t(\mu)$, the coupling used in Section 2.1 is unavailable so that we need to construct a new coupling by change of measure which involves in conditional probability with respect to $\F_t^B$.

\begin{thm}\label{Loh} Assume {\bf (B)}.
Then there exists a constant $c>0$ such that for any $0<f\in \B_b(\R^d), \mu_0,\nu_0 \in \scr P_2(\R^d), t\in (0,T]$ and $\xi,\tilde{\xi}\in L^2(\Omega^1\to\R^d,\F^1_0,\P^1)$ with $\L_{\xi}=\mu_0, \L_{\tilde{\xi}}=\nu_0$,
$$\E\{\mathrm{Ent}(\L_{X^{\xi}_t|\F_t^B}|\L_{X^{\tilde{\xi}}_t|\F_t^B})\}\le c\left\{\ff{4K}{1-\e^{-4K t_0}}+\int_0^{t_0}\frac{4Kt}{1-\e^{-4Kt}}\d t\right\}   \W_2(\mu_0,\nu_0)^2,$$
and consequently,
\begin{equation}\label{logHa}\begin{split}
&P_t\log f(\nu_0) \le \log P_t f(\mu_0)+c\left\{\ff{4K}{1-\e^{-4K t}}+\int_0^{t}\frac{4Kr}{1-\e^{-4Kr}}\d r\right\} \W_2(\mu_0,\nu_0)^2.
\end{split} \end{equation}
 \end{thm}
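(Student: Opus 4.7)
The proof would follow the Case~1 strategy of Theorem~\ref{LHA}, but with a crucial new choice of coupling SDE that circumvents the fact that $\tilde\sigma$ now depends on the conditional distribution. The na\"ive Case~1 coupling --- in which $Y$ is driven by $\tilde\sigma_t(\nu_t)\,\d B_t$, i.e.\ by the coefficient compatible with $\L^{\Q_{t_0}}_{Y|\F^B}=\nu$ --- fails here, because $\d(X^{\mu_0}-Y)$ would then pick up $[\tilde\sigma_t(\mu_t)-\tilde\sigma_t(\nu_t)]\,\d B_t$ and produce a $\|\tilde\sigma_t(\mu_t)-\tilde\sigma_t(\nu_t)\|^2/\xi_t\,\d t$ term in the It\^o calculation for $|X^{\mu_0}-Y|^2/\xi_t$ that is non-integrable near $t_0$. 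The new idea is to drive $Y$ instead by $\tilde\sigma_t(\mu_t)\,\d B_t$, matching $X^{\mu_0}$'s common-noise coefficient, so that $\d(X^{\mu_0}-Y)$ carries no $\d B$-martingale at all and the standard private-noise Girsanov argument goes through.

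Concretely, with $X_0^{\mu_0},X_0^{\nu_0}$ an optimal $\W_2$-coupling of $\mu_0,\nu_0$, the $(\F_t^B)$-conditional laws $\mu_t,\nu_t$ as in Lemma~\ref{EEB}, and $\xi_t=\frac{1}{4K}(1-\e^{4K(t-t_0)})$ (so $-\xi'_t+4K\xi_t=1$, $\xi_{t_0}=0$), I would set
\[
\d Y_t=b_t(Y_t,\nu_t)\,\d t+\sigma_t(Y_t)\,\d W_t+\tilde\sigma_t(\mu_t)\,\d B_t+\sigma_t(Y_t)\sigma_t^{\ast}(\sigma_t\sigma_t^{\ast})^{-1}(X_t^{\mu_0})\frac{X_t^{\mu_0}-Y_t}{\xi_t}\,\d t,\ \ Y_0=X_0^{\nu_0},
\]
and apply Girsanov with $\gamma_t=\sigma_t^{\ast}(\sigma_t\sigma_t^{\ast})^{-1}(X_t^{\mu_0})(Y_t-X_t^{\mu_0})/\xi_t$. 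Independence of $W$ and $B$ gives $\E(R_{t_0}|\F_{t_0}^B)=1$, so $(\hat W,B)$ is a Brownian motion under $\Q_{t_0}$. The It\^o computation for $|X_t^{\mu_0}-Y_t|^2/\xi_t$ is now free of any $\tilde\sigma$-mismatch contribution; using the Young estimate $2K|X^{\mu_0}-Y|\W_2(\mu_t,\nu_t)\le|X^{\mu_0}-Y|^2/(2\xi_t)+2K^2\xi_t\W_2(\mu_t,\nu_t)^2$ to absorb the measure-dependence of $b$, together with the $\xi_t$-ODE, yields
\[
\d\frac{|X_t^{\mu_0}-Y_t|^2}{\xi_t}\le 2K^2\W_2(\mu_t,\nu_t)^2\,\d t+\d M_t
\]
for a local martingale $M_t$. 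Conditioning on $\F^B_t$ to transfer between $\P$- and $\Q_{t_0}$-expectations and using Lemma~\ref{EEB} to bound $\E\W_2(\mu_t,\nu_t)^2\le C\W_2(\mu_0,\nu_0)^2$ then gives both $Y_{t_0}=X_{t_0}^{\mu_0}$, $\Q_{t_0}$-a.s., and
\[
\E[R_{t_0}\log R_{t_0}]=\tfrac12\E_{\Q_{t_0}}\int_0^{t_0}|\gamma_t|^2\,\d t\le c\Big(\frac{4K}{1-\e^{-4Kt_0}}+\int_0^{t_0}\frac{4Kr}{1-\e^{-4Kr}}\,\d r\Big)\W_2(\mu_0,\nu_0)^2,
\]
with the first term arising from the initial displacement $|X_0^{\mu_0}-X_0^{\nu_0}|^2/\xi_0$ and the integral from the $\W_2^2/\xi_t$-weighted dissipation.

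To finish I would work fibrewise in $\omega^0\in\Omega^0$: conditionally on $\omega^0$ everything lives on $\Omega^1$, the driving common-noise integrals become deterministic, and the Radon--Nikodym derivative satisfies $\E^{\P^1}R_{t_0}(\omega^0,\cdot)=1$. The data-processing inequality for relative entropy applied pathwise then yields $\mathrm{Ent}(\mu_{t_0}|\nu_{t_0})(\omega^0)\le\E^{\P^1}[R_{t_0}(\omega^0,\cdot)\log R_{t_0}(\omega^0,\cdot)]$, and taking $\E^{\P^0}$ produces the first displayed inequality of the theorem; the log-Harnack \eqref{logHa} then follows by the Young-inequality argument at the end of the proof of Theorem~\ref{LHA}, identifying $P_t\log f(\nu_0)=\E\nu_t(\log f)$, $P_tf(\mu_0)=\E\mu_t(f)$ and using $\E\log(\cdot)\le\log\E(\cdot)$. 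The main obstacle is precisely this last step: since $Y$ under $\Q_{t_0}$ carries $\tilde\sigma_t(\mu_t)\,\d B_t$ rather than $\tilde\sigma_t(\nu_t)\,\d B_t$, $\L^{\Q_{t_0}}(Y_{t_0}|\F_{t_0}^B)$ is not literally $\nu_{t_0}$ but differs from it via the $\F_{t_0}^B$-measurable martingale $\int_0^{t_0}[\tilde\sigma_s(\mu_s)-\tilde\sigma_s(\nu_s)]\,\d B_s$. Arguing through this mismatch --- essentially by conditioning on $\F_{t_0}^B$ and treating the discrepancy as an $\F^B$-adapted shift that contributes only to an already $\F^B$-measurable quantity --- is exactly what the author signals by announcing a new coupling "which involves conditional probability with respect to $\F_t^B$".
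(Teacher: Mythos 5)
Your diagnosis of the obstacle (the $\|\tilde\sigma_t(\mu_t)-\tilde\sigma_t(\nu_t)\|^2/\xi_t$ term blowing up if one uses the na\"ive Case~1 coupling) is correct, and the strategy of conditioning on the common noise is right, but the coupling you propose has a genuine gap that you acknowledge but do not resolve. If $Y$ is driven by $\tilde\sigma_t(\mu_t)\,\d B_t$, then after Girsanov removes the drift correction, under $\Q_{t_0}^B$ the process $Y$ satisfies $\d Y_t=b_t(Y_t,\nu_t)\d t+\sigma_t(Y_t)\d\hat W_t+\tilde\sigma_t(\mu_t)\d B_t$, which is simply \emph{not} the SDE solved by $X^{\nu_0}$. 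Your proposed repair --- viewing the discrepancy $\int_0^t[\tilde\sigma_s(\nu_s)-\tilde\sigma_s(\mu_s)]\d B_s$ as an $\F^B$-adapted shift "that contributes only to an already $\F^B$-measurable quantity" --- cannot work as stated: shifting $Y$ by that $\F^B$-measurable process changes the arguments of $b_t(\cdot,\nu_t)$ and $\sigma_t(\cdot)$, so the shifted process does not satisfy the SDE for $X^{\nu_0}$ either, and weak uniqueness cannot be invoked to identify $\L^{\Q_{t_0}^B}_{Y_{t_0}}$ with $\L^{\P^B}_{X_{t_0}^{\nu_0}}$. Without that law identification the entire change-of-measure argument collapses; it is not a cosmetic step. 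You also misattribute the $\int_0^{t_0}\frac{4Kr}{1-\e^{-4Kr}}\d r$ term to the $\W_2^2/\xi_t$-weighted dissipation: in your coupling that contribution would only give an $O(t_0)\W_2^2$ term as in Case~1, not this integral, which should have signalled that something in the bookkeeping was off.

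The paper's coupling is structurally different in a way that closes the gap. It keeps $\hat Y_t^\nu$ driven by $\sigma_t(\hat Y_t^\nu+\eta_t^\nu)\,\d W_t$ with $\eta^\nu_t=\int_0^t\tilde\sigma_s(\nu_s)\,\d B_s$, so after Girsanov $\hat Y^\nu$ satisfies precisely the SDE of $\hat X^{\nu_0}:=X^{\nu_0}-\eta^\nu$ and the law identification is immediate. The $\d B$-mismatch is instead absorbed into the \emph{target} of the attracting drift: rather than steering $Y_t$ toward $X_t^{\mu_0}$, one steers $\hat Y_t^\nu+\eta_{t_0}^\nu$ toward $\hat X_t^{\mu_0}+\eta_{t_0}^\mu$, using the \emph{terminal} values $\eta_{t_0}^\mu,\eta_{t_0}^\nu$ in the drift --- anticipating for the original filtration, but adapted under $\P^B$ because $\eta$ is $\F^B$-measurable. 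Since $\eta_t\to\eta_{t_0}$ as $t\to t_0$, the two targets coincide at $t_0$, yielding $Y_{t_0}^\nu=X_{t_0}^{\mu_0}$. The price is that the It\^o computation now picks up terms involving $|\eta_t^\nu-\eta_{t_0}^\nu-(\eta_t^\mu-\eta_{t_0}^\mu)|^2/\xi_t$, which remain integrable because the Brownian increment shrinks like $|t_0-t|$ (estimate \eqref{etn}); these are exactly what produce the $\int_0^{t_0}\frac{4Kr}{1-\e^{-4Kr}}\d r$ piece in the bound. This use of the terminal $\eta_{t_0}$ in the coupling drift is the key idea your proposal is missing.
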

\begin{proof}
 For fixed $\mu_0,\nu_0\in \scr P_2(\R^d)$, let $X_0^{\mu_0},X_0^{\nu_0}$ be chosen in \eqref{COU}.
 Let $X_t^{\mu_0}$ and $X_t^{\nu_0}$ solve \eqref{E'} with initial value $X_0^{\mu_0}$ and $X_0^{\nu_0}$ respectively and $\mu_t$ and $\nu_t$ be defined in \eqref{mnu}. Define
   \begin{equation}\label{XM} \eta_t^\mu:= \int_0^t \tt \si_s(\mu_s)\d B_s,\ \ t\in [0,T].\end{equation}
By  {\bf (B)}, BDG's inequality and Lemma \ref{EEB},  we find   constants $C_1>0$ such that
\beq\label{-2} \begin{split}&\E\Big[\sup_{t\in [0,T]} |\eta_{t}^\mu-\eta_{t}^\nu|^2\Big]\le \tilde{K}^2\E\int_0^T\W_2(\mu_s,\nu_s)^2\d s\leq C_1 \tilde{K}^2T\W_2(\mu_0,\nu_0)^2, \end{split}\end{equation}
and
\beq\label{etn} \E |\eta_{t_1}^\mu-\eta_{t_2}^\mu-(\eta_{t_1}^\nu-\eta_{t_2}^\nu)|^2\le C_1\tilde{K}^2 \W_2(\mu_0,\nu_0)^2|t_1-t_2|,\ \ t_1,t_2\in[0,T]. \end{equation}
Moreover, we derive from \eqref{mnu} that
\begin{equation*} \d X_t^{\mu_0} = b_t(X_t^{\mu_0}, \mu_t)\d t + \sigma_t(X_t^{\mu_0}) \d W_t +\tt\si_t(\mu_t)\d B_t,\ \ t\in [0,T].\end{equation*}
Then
 $\hat X_t^{\mu_0}:= X_t^{\mu_0}-\eta_t^\mu,\ \ t\in [0,T]$ solves
\beq\label{hat} \d \hat{X}_t^{\mu_0} = b_t(\hat{X}_t^{\mu_0}+\eta_t^\mu, \mu_t)\d t + \sigma_t(\hat{X}_t^{\mu_0}+\eta_t^\mu) \d W_t,\ \ t\in [0,T].\end{equation}
Let $t_0\in(0,T]$ and $\xi_t=\frac{1}{4K}(1-\e^{4K(t-t_0)})$ and it holds
\begin{align}\label{xi1}-\xi_t'+4K\xi_t=1.
\end{align}
Now, we construct the coupling process:
\beq\label{2} \beg{split}\d \hat{Y}_t^\nu& = b_t(\hat{Y}_t^\nu+\eta_t^\nu, \nu_t)\d t + \sigma_t(\hat{Y}_t^\nu+\eta_t^\nu) \d W_t\\
&+\sigma_t(\hat{Y}_t^\nu+\eta_t^\nu)[\sigma_t^\ast(\sigma_t\sigma_t^\ast)^{-1}] (\hat{X}_t^{\mu_0}+\eta_t^\mu)\frac{(\hat{X}_t^{\mu_0}+\eta_{t_0}^\mu)-(\hat{Y}_t^\nu+\eta_{t_0}^\nu)}{\xi_t}\d t,\\
&\quad \ t\in [0,t_0),\ \hat{Y}_0^\nu=X_0^{\nu_0}.\end{split} \end{equation}
Define
\begin{align}\label{PTY}\P^{B}:= \P(\ \cdot\ |\F_T^B),\ \ \E^{B}:= \E(\ \cdot\ | \F_T^B).\end{align}
Set $\tau_n=t_0\wedge\inf\{t\in[0,t_0), |\hat{X}_t^\mu+\eta_{t_0}^\mu|\vee|\hat{Y}_t^\nu+\eta_{t_0}^\nu|\geq n\}$. Then we have $\P^{B}$-a.s. $\tau_n\uparrow t_0$ as $n\uparrow\infty$.
Let
\begin{align}\label{ETA}
\nonumber&\beta_t:=[\sigma_t^\ast(\sigma_t\sigma_t^\ast)^{-1}](\hat{X}_t^{\mu_0}+\eta_t^\mu) \frac{(\hat{Y}_t^\nu+\eta_{t_0}^\nu)-(\hat{X}_t^{\mu_0}+\eta_{t_0}^\mu)}{\xi_t},\\
&\tilde{W}_t :=   W_t-\int_0^t \beta_s\d s,\ \ R_t:= \e^{\int_0^{t}\<\beta_r, \d W _r\> -\ff 1 2 \int_0^{t} |\beta_r|^2\d r},\\
\nonumber& \Q_t^{B}:= R_t\P^{B}, \ \ t\in [0,t_0).
\end{align}
Fix $s\in[0,t_0)$.
Girsanov's theorem yields that under the weighted conditional probability $\Q_{s\wedge\tau_n}^{B},$
$\tilde{W}_{t}$ is a $d_W$-dimensional Brownian motion on $[0,s\wedge\tau_n]$.
Hence, \eqref{hat} and \eqref{2} can be reformulated as
\begin{equation*} \begin{split}\d [\hat{X}_t^{\mu_0} +\eta_{t_0}^\mu-\eta_{t_0}^\nu)]&= b_t(\hat{X}_t^{\mu_0}+\eta_t^\mu, \mu_t)\d t + \sigma_t(\hat{X}_t^{\mu_0}+\eta_t^\mu) \d \tilde{W}_t\\
&+\frac{(\hat{Y}_t^\nu+\eta_{t_0}^\nu)-(\hat{X}_t^{\mu_0}+\eta_{t_0}^\mu)}{\xi_t},\ \ t\in [0,s\wedge \tau_n],
\end{split}\end{equation*}
and
 \beq\label{212} \beg{split}&\d \hat{Y}_t^\nu = b_t(\hat{Y}_t^\nu+\eta_t^\nu,\nu_t)\d t + \sigma_t(\hat{Y}_t^\nu+\eta_t^\nu) \d \tilde{W}_t,\quad \ t\in [0,s\wedge \tau_n],\ \hat{Y}_0^\nu=X_0^{\nu_0}.\end{split} \end{equation}
 By    It\^{o}'s formula, we obtain
\beq\label{xi3}\begin{split} &\d \frac{|(\hat{Y}_t^\nu+\eta_{t_0}^\nu)-(\hat{X}_t^{\mu_0}+\eta_{t_0}^\mu)|^2}{\xi_t}\d t\\
&=-\frac{\xi_t'|(\hat{Y}_t^\nu+\eta_{t_0}^\nu)-(\hat{X}_t^{\mu_0}+\eta_{t_0}^\mu)|^2}{\xi_t^2}\d t\\
&+\frac{2\< b_t(\hat{Y}_t^\nu+\eta_t^\nu, \nu_t)- b_t(\hat{X}_t^{\mu_0}+\eta_t^\mu, \mu_t),(\hat{Y}_t^\nu+\eta_{t_0}^\nu)-(\hat{X}_t^{\mu_0}+\eta_{t_0}^\mu)\>}{\xi_t}\d t\\
&+\frac{2\<[\sigma_t(\hat{Y}_t^\nu+\eta_t^\nu)-\sigma_t(\hat{X}_t^{\mu_0}+\eta_t^\mu)]\d \tilde{W}_t, (\hat{Y}_t^\nu+\eta_{t_0}^\nu)-(\hat{X}_t^{\mu_0}+\eta_{t_0}^\mu)\>}{\xi_t}\\
&-2\frac{|(\hat{Y}_t^\nu+\eta_{t_0}^\nu)-(\hat{X}_t^{\mu_0}+\eta_{t_0}^\mu)|^2}{\xi_t^2} \d t\\ &+\frac{\|\sigma_t(\hat{Y}_t^\nu+\eta_t^\nu)-\sigma_t(\hat{X}_t^{\mu_0}+\eta_t^\mu)\|_{HS}^2}{\xi_t}\d t,\ \ t\in [0,s\wedge \tau_n].
\end{split}\end{equation}
{\bf(B)} implies that
\begin{align*}
&\frac{2\< b_t(\hat{Y}_t^\nu+\eta_t^\nu, \nu_t)- b_t(\hat{X}_t^{\mu_0}+\eta_t^\mu, \mu_t),(\hat{Y}_t^\nu+\eta_{t_0}^\nu)-(\hat{X}_t^{\mu_0}+\eta_{t_0}^\mu)\>}{\xi_t}\\
&\leq \frac{2K|(\hat{Y}_t^\nu+\eta_{t_0}^\nu)-(\hat{X}_t^{\mu_0}+\eta_{t_0}^\mu)|^2}{\xi_t}\\
&+\frac{2K[\W_2(\nu_t,\mu_t)+ |\eta_{t}^\nu-\eta_{t_0}^\nu-(\eta_{t}^\mu-\eta_{t_0}^\mu)|] |(\hat{Y}_t^\nu+\eta_{t_0}^\nu)-(\hat{X}_t^{\mu_0}+\eta_{t_0}^\mu)|}{\xi_t}\\
&\leq \frac{[2K\xi_t+\frac{1}{2}]|(\hat{Y}_t^\nu+\eta_{t_0}^\nu)-(\hat{X}_t^{\mu_0}+\eta_{t_0}^\mu)|^2}{\xi_t^2} +2K^2[\W_2(\nu_t,\mu_t)+ |\eta_{t}^\nu-\eta_{t_0}^\nu-(\eta_{t}^\mu-\eta_{t_0}^\mu)|] ^2,
\end{align*}
and
\begin{align*}
&\frac{\|\sigma_t(\hat{Y}_t^\nu+\eta_t^\nu)-\sigma_t(\hat{X}_t^{\mu_0}+\eta_t^\mu)\|_{HS}^2}{\xi_t}\\
&\leq \frac{2K\xi_t|(\hat{Y}_t^\nu+\eta_{t_0}^\nu)-(\hat{X}_t^{\mu_0}+\eta_{t_0}^\mu)|^2}{\xi_t^2}+ \frac{2K|\eta_{t}^\nu-\eta_{t_0}^\nu-(\eta_{t}^\mu-\eta_{t_0}^\mu)|^2}{\xi_t}.
\end{align*}
This together with \eqref{xi3} yields that
\begin{equation*}\begin{split} &\d \frac{|(\hat{Y}_t^\nu+\eta_{t_0}^\nu)-(\hat{X}_t^{\mu_0}+\eta_{t_0}^\mu)|^2}{\xi_t}\\
&\leq \frac{[-\xi_t'+4K\xi_t-\frac{3}{2}]|(\hat{Y}_t^\nu+\eta_{t_0}^\nu)-(\hat{X}_t^{\mu_0}+\eta_{t_0}^\mu)|^2}{\xi_t^2}\d t\\
&+2K^2[\W_2(\nu_t,\mu_t)+ |\eta_{t}^\nu-\eta_{t_0}^\nu-(\eta_{t}^\mu-\eta_{t_0}^\mu)|] ^2\d t\\
&+\frac{2K|\eta_{t}^\nu-\eta_{t_0}^\nu-(\eta_{t}^\mu-\eta_{t_0}^\mu)|^2}{\xi_t}\d t\\
&+\frac{2\<[\sigma_t(\hat{Y}_t^\nu+\eta_t^\nu)-\sigma_t(X_t^\mu+\eta_t^\mu)]\d \tilde{W}_t, (\hat{Y}_t^\nu+\eta_{t_0}^\nu)-(\hat{X}_t^\mu+\eta_{t_0}^\mu)\>}{\xi_t},\ \ t\in [0,s\wedge \tau_n].
\end{split}\end{equation*}
Combining this with \eqref{xi1}, we deduce
\begin{align*}
&\E_{\Q_{s\wedge\tau_n}^{B}}\int_0^{s\wedge\tau_n}\frac{|(\hat{Y}_t^\nu+\eta_{t_0}^\nu)-(\hat{X}_t^{\mu_0}+\eta_{t_0}^\mu)|^2}{\xi_t^2}\d t\\
&\leq \frac{2\E^B|(\hat{Y}_0^\nu+\eta_{t_0}^\nu)-(\hat{X}_0^{\mu_0}+\eta_{t_0}^\mu)|^2}{\xi_0}+8K^2\int_0^s\W_2(\nu_t,\mu_t)^2\d t\\
&+ \int_0^{s}8K^2|\eta_{t}^\nu-\eta_{t_0}^\nu-(\eta_{t}^\mu-\eta_{t_0}^\mu)| ^2\d t+\int_0^{s}\frac{4K|\eta_{t}^\nu-\eta_{t_0}^\nu-(\eta_{t}^\mu-\eta_{t_0}^\mu)|^2}{\xi_t}\d t.
\end{align*}
As a result, it follows from \eqref{ETA} and {\bf (B)} that
\begin{equation*}\begin{split}  &\E^{B}[R_{s\wedge\tau_n}\log R_{s\wedge\tau_n}]=\E_{\Q_{s\wedge\tau_n}^{B}}[\log R_{s\wedge\tau_n}]=\frac{1}{2}\E_{\Q_{s\wedge\tau_n}^{B}}\int_0^{s\wedge \tau_n} |\beta_t|^2\d t\\
&\leq \lambda^{-1}\frac{\E^B|(\hat{Y}_0^\nu+\eta_{t_0}^\nu)-(\hat{X}_0^\mu+\eta_{t_0}^\mu)|^2}{\xi_0}+4\lambda^{-1}K^2\int_0^s\W_2(\nu_t,\mu_t)^2\d t\\
&+ 4\lambda^{-1}K^2\int_0^{s}|\eta_{t}^\nu-\eta_{t_0}^\nu-(\eta_{t}^\mu-\eta_{t_0}^\mu)| ^2\d t+2\lambda^{-1}K\int_0^{s}\frac{|\eta_{t}^\nu-\eta_{t_0}^\nu-(\eta_{t}^\mu-\eta_{t_0}^\mu)|^2}{\xi_t}\d t.\end{split}\end{equation*}
This means that $\{R_{s\wedge\tau_n}\}_{n\geq 1}$ is a uniform integrable martingale under $\P^{B}$ so that we derive from the martingale convergence theorem and Fatou's lemma that
\begin{align}\label{*N3}
\nonumber &\frac{1}{2}\E_{\Q_{s}^{B}}\int_0^{s} |\beta_t|^2\d t=\E^{B}[R_{s}\log R_{s}]\\
&\leq \lambda^{-1}\frac{\E^B|(\hat{Y}_0^\nu+\eta_{t_0}^\nu)-(\hat{X}_0^\mu+\eta_{t_0}^\mu)|^2}{\xi_0}+4\lambda^{-1}K^2\int_0^s\W_2(\nu_t,\mu_t)^2\d t\\
\nonumber&+ 4\lambda^{-1}K^2\int_0^{s}|\eta_{t}^\nu-\eta_{t_0}^\nu-(\eta_{t}^\mu-\eta_{t_0}^\mu)| ^2\d t\\
\nonumber&+2\lambda^{-1}K\int_0^{s}\frac{|\eta_{t}^\nu-\eta_{t_0}^\nu-(\eta_{t}^\mu-\eta_{t_0}^\mu)|^2}{\xi_t}\d t,\ \ s\in[0,t_0).\end{align}
This combined with \eqref{etn} implies that $\{R_{s}\}_{s\in[0,t_0]}$ is a uniform integrable martingale under $\P^{B}$ and $\Q_{t_0}^{B}$-a.s. $\hat{Y}_{t_0}^\nu+\eta_{t_0}^\nu=\hat{X}_{t_0}^{\mu_0}+\eta_{t_0}^\mu$ in view of the definition of $\beta_t$ and $\xi_t$ and \eqref{*N3} for $s=t_0$ due to the Fatou's lemma. It follows from the weak uniqueness to \eqref{212} on $[0,t_0]$ due to {\bf(B)} that
\begin{align}\label{HAY}\L^{\Q_{t_0}^{B}}_{\hat Y_{t}^\nu } =\L^{\P^{B}}_{\hat X_{t}^{\nu_0}},\ \ t\in[0,t_0],
 \end{align}where $\hat X_{t}^{\nu_0}=X_{t}^{\nu_0}-\eta^\nu_t, t\in[0,T]$ solves
$$ \d \hat{X}_t^{\nu_0} = b_t(\hat{X}_t^{\nu_0}+\eta_t^\nu, \nu_t)\d t + \sigma_t(\hat{X}_t^{\nu_0}+\eta_t^\nu) \d W_t,\ \ t\in [0,T].$$
 Let   $Y_{t}^\nu:=\hat{Y}_t^\nu+\eta_t^\nu, t\in[0,t_0]$. Since
 $\eta_{t_0}^\nu$ is measurable with respect to $\F_{t_0}^B$, it follows from \eqref{HAY} that
\begin{align}\label{HHA}\L^{\Q_{t_0}^{B}}_{Y_{t_0}^\nu}= \L^{\Q_{t_0}^{B}}_{\hat Y_{t_0} +\eta_{t_0}^\nu} = \L^{\P^{B}}_{\hat X_{t_0}^{\nu_0} +\eta_{t_0}^\nu} = \L^{\P^{B}}_{X_{t_0}^{\nu_0} }.
\end{align}
Again by \eqref{*N3} for $s=t_0$, \eqref{HHA}, the fact $\Q_{t_0}^{B}$-a.s.  $Y_{t_0}^\nu=\hat{Y}_{t_0}^\nu+\eta_{t_0}^\nu=\hat{X}_{t_0}^{\mu_0}+\eta_{t_0}^\mu=X_{t_0}^{\mu_0}$ and Young's  inequality, we conclude that
\beg{align*} &\E^{B}[ \log f(X_{t_0}^{\nu_0})] = \E_{\Q_{t_0}^{B}} [\log f(Y_{t_0}^\nu) ]\\
 &= \E^{B}[ R_{t_0}  \log f(X_{t_0}^{\mu_0}) ]  \le \log \E^{B} [ f(X_{t_0}^{\mu_0})] + \E^{B} [R_{t_0}\log R_{t_0}] \\
& \leq \log \E^{B} [ f(X_{t_0}^{\mu_0})]  +\lambda^{-1}\frac{\E^B|(\hat{Y}_0^\nu+\eta_{t_0}^\nu)-(\hat{X}_0^\mu+\eta_{t_0}^\mu)|^2}{\xi_0}+4\lambda^{-1}K^2\int_0^{t_0}\W_2(\nu_t,\mu_t)^2\d t\\
&+ 4\lambda^{-1}K^2\int_0^{t_0}|\eta_{t}^\nu-\eta_{t_0}^\nu-(\eta_{t}^\mu-\eta_{t_0}^\mu)| ^2\d t\\
&+2\lambda^{-1}K\int_0^{t_0}\frac{|\eta_{t}^\nu-\eta_{t_0}^\nu-(\eta_{t}^\mu-\eta_{t_0}^\mu)|^2}{\xi_t}\d t=:\log \E^{B} [ f(X_{t_0}^{\mu_0})]+\Phi_{t_0},\ \ 0< f\in \B_b(\R^d).\end{align*}
 So,
 by \cite[Theorem 1.4.2(2)]{Wbook}, Lemma \ref{EEB}, \eqref{-2}, \eqref{etn} and \eqref{ETA},    we find a constant $c>0$ such that
\begin{align*}\E\{\mathrm{Ent}(\L_{X^{\nu_0}_t|\F_t^B}|\L_{X^{\mu_0}_t|\F_t^B})\}&\leq \E \Phi_{t_0}\le c\left\{\ff{4K}{1-\e^{-4K t_0}}+\int_0^{t_0}\frac{4Kt}{1-\e^{-4Kt}}\d t\right\}   \W_2(\mu_0,\nu_0)^2.
\end{align*}
This together with the fact
\begin{align}\label{eng}\mathrm{Ent}(\L_{X^{\nu_0}_t}|\L_{X^{\mu_0}_t})\leq \E\{\mathrm{Ent}(\L_{X^{\nu_0}_t|\F_t^B}|\L_{X^{\mu_0}_t|\F_t^B})\}
\end{align}
implies \eqref{logHa}, which combined with \cite[Theorem 1.4.2(2)]{Wbook} completes the proof.
 \end{proof}
\subsection{Conditional propagation of chaos}
Fix $T>0$. Let $(\Omega^i, \scr F^i, (\scr F^i_t)_{t\geq 0},\P^i), i=0,1$ and $(\Omega, \scr F, (\scr F_t)_{t\geq 0},\P)$ be defined in Section 1. Let $N\ge1$ be an integer, $(W^i_t)_{1\le i\le N}$ be $N$ independent $d_W$-dimensional Brownian motions on $(\Omega^1, \scr F^1, (\scr F^1_t)_{t\geq 0},\P^1)$, $B_t$ be a $d_B$-dimensional Brownian motion on $(\Omega^0, \scr F^0, (\scr F^0_t)_{t\geq 0},\P^0)$, and $(X_0^i)_{1\le i\le N}$ be i.i.d. $(\Omega^1,\F_0^1)$-measurable $\R^d$-valued random variables. Let $b:[0,T]\times \R^d\times\scr P(\R^d)\to\R^d$, $\sigma:[0,T]\times \R^d\times\scr P(\R^d)\to\R^d\otimes\R^{d_W}$ and $\tilde{\sigma}:[0,T]\to\R^d\otimes\R^{d_B}$ be measurable. Consider conditional McKean-Vlasov SDEs
\begin{align}\label{GPS}\d X_t^i= b_t(X_t^i, \L_{X_t^i|\F_{t}^B})\d t+  \sigma_t(X^i_t,\L_{X_t^i|\F_{t}^B}) \d W^i_t+ \tt \si_t\d  B_t,\ \ 1\leq i\leq N,
\end{align}
and the mean field interacting particle system with common noise
\begin{align}\label{GPS00}\d X^{i,N}_t=b_t(X_t^{i,N}, \hat\mu_t^N)\d t+\sigma_t(X^{i,N}_t, \hat\mu_t^N) \d W^i_t+ \tt \si_t\d  B_t,\ \ 1\leq i\leq N,
\end{align}
where for any $1\leq i\leq N$, $X_0^{i,N}$ is $(\Omega^1,\F_0^1)$-measurable $\R^d$-valued random variable, the distribution of $(X_0^{1,N},X_0^{2,N},\cdots,X_0^{N,N})$ is exchangeable and $\hat\mu_t^N$ is the empirical distribution of $(X_t^{i,N})_{1\leq i\leq N}$, i.e.
\begin{equation*}
 \hat\mu_t^N =\ff{1}{N}\sum_{j=1}^N\dd_{X_t^{j,N}}.
 \end{equation*}
\begin{thm}\label{POC}
Assume $b,\sigma,\tilde{\sigma}$ are locally bounded and
\begin{align}\label{HTL00}
\nonumber&|b_t(x,\mu)-b_t(y,\nu)| +\|\sigma_t(x,\mu)-\sigma_t(y,\nu)\|_{HS}\\
&\leq K(|x-y|+\W_2(\mu,\nu)),\ \ t\in[0,T], x,y\in\R^d, \mu,\nu\in\scr P_2(\R^d).
\end{align}
Then the following assertions hold.

(1) Assume that $\E|X_0^1|^{q}<\infty$ for some $q>2$.
Then there exists a constant $C>0$ depending only on $d,T$ and $\E|X_0^1|^{q}$ such that
\begin{equation}\begin{split}\label{S1}
& \sup_{t\in[0,T]}\E\W_2(\hat{\mu}_t^N,\L_{X_t^1|\F_t^B}^{\P})^2\\
&\le C\frac{1}{N}\W_2(\L_{(X_{0}^{1},X_{0}^{2},\cdots, X_{0}^{N})},\L_{(X_{0}^{1,N},X_{0}^{2,N},\cdots, X_{0}^{N,N})})^2+C R_{d,q}(N),
\end{split}\end{equation}
and
\begin{align}
\label{Wao}\nonumber&\E\W_2(\L^{\P}_{(X_{t}^{1},X_{t}^{2},\cdots, X_{t}^{k})|\F_t^B},\L^{\P}_{(X_{t}^{1,N},X_{t}^{2,N},\cdots, X_{t}^{k,N})|\F_t^B})^2\\
&\leq C\frac{k}{N}\W_2(\L_{(X_{0}^{1},X_{0}^{2},\cdots, X_{0}^{N})},\L_{(X_{0}^{1,N},X_{0}^{2,N},\cdots, X_{0}^{N,N})})^2+CkR_{d,q}(N),
\end{align}
where
\begin{equation*}\begin{split}
R_{d,q}(N)=
\begin{cases}
N^{-\ff{1}{2}}+N^{-\frac{q-2}{q}},~~~~~~~~~~~~~~~~~~~~d<4,q\neq 4,\\
N^{-\ff{1}{2}}\log (1+N)+N^{-\frac{q-2}{q}},~~~~ ~~~d=4, q\neq 4,\\
N^{-\ff{2}{d}}+N^{-\frac{q-2}{q}},~~~~~~~~~~~~~~~~~~~~d>4, q\neq\frac{d}{d-2}.
\end{cases}
\end{split}\end{equation*}
(2) If in addition, $\sigma_t(x,\mu)$ does not depend on $\mu$ and $\lambda^{-1}\geq\sigma\sigma^\ast\geq \lambda$ for some $\lambda\in(0,1]$, then for any $k\geq 1$ and $k\leq N$, it holds
\begin{align}\label{VEN}
&\E\mathrm{Ent}(\L^{\P}_{(X_{t}^{1,N},X_{t}^{2,N},\cdots, X_{t}^{k,N})|\F_t^B}|\L^{\P}_{(X_{t}^{1},X_{t}^{2},\cdots,X_{t}^{k})|\F_t^B})\\
\nonumber&\leq CkR_{d,q}(N)+\frac{C}{1-\e^{-(K^2+2K)t}}\frac{k}{N}\W_2(\L_{(X_{0}^{1},X_{0}^{2},\cdots, X_{0}^{N})},\L_{(X_{0}^{1,N},X_{0}^{2,N},\cdots, X_{0}^{N,N})})^2.
\end{align}
\end{thm}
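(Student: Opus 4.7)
My plan for part (1) is to use a synchronous coupling between \eqref{GPS} and \eqref{GPS00}: drive both systems with the same private Brownian motions $(W^i)_{1\le i\le N}$ and common noise $B$, and couple the initial data optimally so that $\E\sum_{i=1}^N|X_0^i-X_0^{i,N}|^2=\W_2(\L_{(X_0^i)_{i\le N}},\L_{(X_0^{i,N})_{i\le N}})^2$. By It\^o's formula and \eqref{HTL00}, the process $|X_t^i-X_t^{i,N}|^2$ satisfies a Gr\"onwall-type inequality with perturbation $\W_2(\mu_t,\hat\mu_t^N)^2$, where $\mu_t:=\L^{\P}_{X_t^1|\F_t^B}$. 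I then split
$$\W_2(\mu_t,\hat\mu_t^N)\le \W_2(\mu_t,\tilde\mu_t^N)+\W_2(\tilde\mu_t^N,\hat\mu_t^N),\qquad \tilde\mu_t^N:=\ff{1}{N}\sum_{j=1}^N\dd_{X_t^j},$$
estimate the second term by $\ff{1}{N}\sum_j|X_t^j-X_t^{j,N}|^2$ via the obvious coupling of the two empirical measures, and control the first by applying Fournier-Guillin conditionally on $\F_T^B$: since $(X_t^j)_{j\le N}$ are conditionally i.i.d.\ with law $\mu_t$ given $\F_T^B$, the standard rate together with moment bounds $\sup_{t\le T}\E(|X_t^1|^q|\F_T^B)<\infty$ yields $\E\W_2(\tilde\mu_t^N,\mu_t)^2\le CR_{d,q}(N)$. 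Averaging over $i$ using exchangeability and applying Gr\"onwall gives \eqref{S1}. For the $k$-marginal bound \eqref{Wao} one applies the conditional Wasserstein inequality $\E\W_2(\L^{\P}_{U|\F_t^B},\L^{\P}_{V|\F_t^B})^2\le \E|U-V|^2$ on $\R^{kd}$ to $U=(X_t^i)_{i\le k}$ and $V=(X_t^{i,N})_{i\le k}$, then invokes exchangeability to reduce to the $k=1$ case.

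For part (2), I combine (1) with the log-Harnack approach of Theorem \ref{Loh} through coupling by change of measure and the entropy inequality \cite[Lemma 2.1]{23RW}. Introduce the frozen reference system
$$\d Y_t^{i,N}=b_t(Y_t^{i,N},\mu_t)\d t+\sigma_t(Y_t^{i,N})\d W_t^i+\tt\si_t\d B_t,\quad Y_0^{i,N}=X_0^i,$$
so that conditionally on $\F_T^B$ the $Y_t^{i,N}$ are i.i.d.\ copies of $X_t^1$ with joint $k$-marginal $\mu_t^{\otimes k}$. A Girsanov change of measure turning the drift of $X^{i,N}$ into that of $Y^{i,N}$ has log-density bounded by
$$\ff{1}{2}\sum_{i=1}^N\int_0^t\lambda^{-1}|b_s(X_s^{i,N},\hat\mu_s^N)-b_s(X_s^{i,N},\mu_s)|^2\d s\le CN\int_0^t\W_2(\hat\mu_s^N,\mu_s)^2\d s\le CNR_{d,q}(N),$$
by \eqref{HTL00}, non-degeneracy, and \eqref{S1}. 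The initial mismatch between $X_0^{i,N}$ and $X_0^i=Y_0^{i,N}$ is absorbed by Wang's Harnack inequality with power applied to the frozen SDE at the $N$-particle level; combined with the Girsanov step in the manner of \cite[Lemma 2.1]{23RW}, this yields an entropy estimate for the full system:
$$\E\mathrm{Ent}(\L^{\P}_{(X_t^{i,N})_{i\le N}|\F_t^B}|\mu_t^{\otimes N})\le CNR_{d,q}(N)+\ff{C}{1-\e^{-(K^2+2K)t}}\W_2(\L_{(X_0^i)_{i\le N}},\L_{(X_0^{i,N})_{i\le N}})^2.$$
Finally, sub-additivity of relative entropy for exchangeable laws with product reference, $\E\mathrm{Ent}(\nu^{(k)}|\mu_t^{\otimes k})\le\ff{k}{N}\E\mathrm{Ent}(\nu^{(N)}|\mu_t^{\otimes N})$, converts this into \eqref{VEN}.

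The main obstacle is the coherent combination of the two effects, since relative entropy does not satisfy a triangle inequality. Plain Girsanov handles only the drift mismatch between processes with the same initial datum, and the initial-distribution mismatch between $X_0^{i,N}$ and $X_0^i$ would otherwise produce a singular (possibly infinite) entropy, reflecting the allowed singularity of the initial laws. Wang's Harnack inequality with power resolves this by realising the initial mismatch as a drift perturbation over $[0,t]$ through an auxiliary coupled process, exactly as in the proof of Theorem \ref{Loh} lifted to $\R^{Nd}$; the resulting exponential martingale factor absorbs into the log-density and produces the singular factor $1/(1-\e^{-(K^2+2K)t})$ in front of the initial $\W_2^2$ term. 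A secondary technical point is the uniform-in-$\omega^0$ conditional Fournier-Guillin bound, which requires $\sup_{t\le T}\E(|X_t^1|^q|\F_T^B)<\infty$; this follows from the moment assumption on $X_0^1$, local boundedness of the coefficients, and a standard Gr\"onwall argument for the conditional SDE.
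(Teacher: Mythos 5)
Your proposal follows essentially the same route as the paper in both parts. For (1): synchronous coupling with the same private and common Brownian motions and optimally coupled initial data, Gr\"onwall via the decomposition $\W_2(\mu_t,\hat\mu_t^N)\le\W_2(\mu_t,\tilde\mu_t^N)+\W_2(\tilde\mu_t^N,\hat\mu_t^N)$, and the Fournier--Guillin rate applied under $\P(\,\cdot\,|\F_T^B)$ together with the conditional moment bound $\E^B\sup_{t\le T}|X_t^1|^q<\infty$ — this is exactly the paper's argument (the paper uses BDG to control a pathwise supremum, but this is a cosmetic variant of your It\^o-plus-Gr\"onwall step). For (2): the three ingredients — Girsanov on $[0,t]$ turning the interacting drift into the frozen drift $b_t(\cdot,\mu_t)$ for particles started at $X_0^{i,N}$, Wang's Harnack inequality with power to absorb the initial mismatch between $X_0^{i,N}$ and $X_0^i$, and \cite[Lemma~2.1]{23RW} / \cite[Theorem~1.4.2]{Wbook} to glue the two effects — are precisely the paper's, and you correctly identify that plain Girsanov alone cannot bridge singular initial laws.

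The one genuine (though equivalent) deviation is the reduction from the $N$-particle bound to the $k$-marginal bound: you invoke sub-additivity of relative entropy against a product reference for exchangeable laws, $\E\,\mathrm{Ent}(\nu^{(k)}\,|\,\mu_t^{\otimes k})\le\frac{k}{N}\,\E\,\mathrm{Ent}(\nu^{(N)}\,|\,\mu_t^{\otimes N})$, whereas the paper proves the same reduction by hand: it stays in the log-Harnack formulation \eqref{ENP} and plugs in the tensorized test function $F_f(x_1,\dots,x_{\lfloor N/k\rfloor k})=\prod_{i=0}^{\lfloor N/k\rfloor-1}f(x_{ik+1},\dots,x_{ik+k})$ together with exchangeability and $\lfloor N/k\rfloor^{-1}\le 2k/N$, only afterwards converting to entropy. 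The two routes are dual (log-Harnack $\Leftrightarrow$ entropy via \cite[Theorem~1.4.2(2)]{Wbook}); yours is marginally cleaner if one is willing to cite the sub-additivity inequality, while the paper's is self-contained. One further small remark: the paper also records a truncation step ($b^{(n)}=-n\vee b\wedge n$ and approximation) to justify Girsanov when $b$ is unbounded; you should mention this, since the Novikov/uniform-integrability issue for the exponential martingale is otherwise glossed over.
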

\begin{proof}
(1) 
Let $\P^{B},\E^{B}$ be in \eqref{PTY}. It is standard to derive from \eqref{HTL00} that
\begin{align}\label{mmo} \E^{B}[\sup_{t\in[0,T]}|X^{1}_t|^q]<c_1\left(1+\E|X^1_0|^q+\left|\int_0^T\tilde{\sigma}_t\d B_t\right|^q\right)
\end{align}
for some constant $c_1>0$ depending on $q, T$.
 Denote $\mu_t^i=\L_{X_t^i|\F_t^B}^{\P},i\geq 1$. Since \eqref{GPS} is well-posed under \eqref{HTL00} due to Lemma \ref{EEB}, $\mu_t^i$ does not depend on $i$ and we write $\mu_t=\mu_t^i,1\leq i\leq N$.
 Letting $\tt\mu_t^N =\ff{1}{N}\sum_{j=1}^N\dd_{X_t^j}$, we obtain from the triangle inequality that
\begin{equation}\label{A4}
\begin{split}
\W_2(\hat\mu^N _s,\mu_s)&\le
\mathbb{W}_2(\hat\mu^N_s,\tt\mu^N_s)+\mathbb{W}_2(\tt\mu^N_s,\mu_s)\\
&\le
\left(\frac{1}{N}\sum_{i=1}^N|X^{i,N}_s-X^i_s|^2\right)^{\frac{1}{2}} +\mathbb{W}_2(\tt\mu^N_s,\mu_s).
\end{split}
\end{equation}
By BDG's inequality, \eqref{HTL00} and \eqref{A4}, there exists a constant $c_2>0$ such that
\begin{equation*}
\begin{split}
\sum_{i=1}^N\E^{B}\sup_{s\in[0,t]}|X^{i,N}_s-X^{i}_s|^2
&\le c_2\sum_{i=1}^N\E|X^{i,N}_0-X^{i}_0|^2+c_2\int_0^t\E^{B}\sum_{i=1}^N|X^{i,N}_s-X^{i}_s|^2\d s\\
&+c_2N\E^{B}\int_0^t\W_2(\tilde{\mu}^N_s,\mu_s)^2\d s.
\end{split}
\end{equation*}
By Gronwall's inequality, we can find a constant $c_3>0$ such that
\begin{equation}\label{LEW00}
\sum_{i=1}^N\E^{B}\sup_{s\in[0,t]}|X^{i,N}_s-X^{i}_s|^2\le c_3\sum_{i=1}^N\E|X^{i,N}_0-X^{i}_0|^2+c_3N\E^{B}\int_0^t\mathbb{W}_2(\tt\mu^N_s,\mu_s)^2\d s.
\end{equation}
By \cite[Theorem 1]{FG} for $p=2$ and \eqref{mmo}, there exists a constant $C_0>0$ depending only on $q,d$ such that
\begin{align}\label{MYt}
\nonumber&\E^{B}\mathbb{W}_2(\tt\mu^N_s,\mu_s)^2\le C_0 \left(\E^{B}[\sup_{t\in[0,T]}|X^{1}_t|^q]\right)^{\frac{2}{q}}
R_{d,q}(N)\\
&\leq C_0 c_1^{\frac{2}{q}}\left(1+\E|X^1_0|^q+\left|\int_0^T\tilde{\sigma}_t\d B_t\right|^q\right)^{\frac{2}{q}}R_{d,q}(N),\ \ s\in[0,T].
\end{align}
So, we derive \eqref{Wao} by combining with \eqref{LEW00} and the fact
\begin{align}
\label{PUk}\nonumber&\W_2(\L_{(X_{t}^{1},X_{t}^{2},\cdots, X_{t}^{k})|\F_t^B},\L_{(X_{t}^{1,N},X_{t}^{2,N},\cdots, X_{t}^{k,N})|\F_t^B})^2\\
&\leq \frac{k}{N}\W_2(\L_{(X_{t}^{1},X_{t}^{2},\cdots, X_{t}^{N})|\F_t^B},\L_{(X_{t}^{1,N},X_{t}^{2,N},\cdots, X_{t}^{N,N})|\F_t^B})^2.
\end{align}
Finally, \eqref{MYt} together with \eqref{LEW00} and \eqref{A4} derives
\begin{align*}
\nonumber\E^{B}\mathbb{W}_2(\hat{\mu}^N_s,\mu_s)^2&\le C_1\frac{1}{N}\sum_{i=1}^N\E|X^{i,N}_0-X^{i}_0|^2\\
&+C_1\left(1+\E|X^1_0|^q+\left|\int_0^T\tilde{\sigma}_t\d B_t\right|^q\right)^{\frac{2}{q}}R_{d,q}(N),\ \ s\in[0,T],
\end{align*}
for some constant $C_1>0$ depending on $d,T$, which yields \eqref{S1} by taking expectation with respect to $\P$.

(2) {\bf(Step (i))} We first assume that $b$ is bounded.
Define
\begin{align}\label{PT0}\P^{B,0}:= \P(\ \cdot\ |\F_T^B\bigvee\F_0),\ \ \E^{B,0}:= \E(\ \cdot\ | \F_T^B\bigvee\F_0).\end{align}
Consider
\begin{align}\label{GPSer}\d \bar{X}_t^{i}&= b_t(\bar{X}_t^{i}, \mu_t)\d t+  \sigma_t(\bar{X}^{i}_t) \d W^i_t+\tt \si_t\d  B_t,\ \ t\in [0,T], \bar{X}_0^{i}=X_0^{i,N}, 1\leq i\leq N.
\end{align}
Rewrite \eqref{GPSer} as
\begin{align}\label{MXY}\nonumber  \d \bar{X}_t^{i}&= b_t(\bar{X}_t^{i}, \frac{1}{N}\sum_{i=1}^N\delta_{\bar{X}_t^{i}})\d t+  \sigma_t(\bar{X}^{i}_t) \d \hat{W}^i_t+\tt \si_t\d  B_t,\ \ t\in [0,T], 1\leq i\leq N.
\end{align}
where for $1\leq i\leq N$,
\begin{equation*}\begin{split}
&\hat{W}_t^i :=   W_t^i-\int_0^t \gamma_s^i\d s,\\ &\gamma_t^i:=[\sigma_t^\ast(\sigma_t\sigma_t^\ast)^{-1}](\bar{X}^{i}_t)\left(b_t(\bar{X}_t^{i}, \frac{1}{N}\sum_{i=1}^N\delta_{\bar{X}_t^{i}})-b_t(\bar{X}_t^{i}, \mu_t)\right), \ \ t\in[0,T].
\end{split}\end{equation*}
Fix $t_0\in[0,T]$. Let
\begin{equation*}\begin{split}
&\gamma_t=(\gamma_t^1,\gamma_t^2,\cdots, \gamma_t^N),\ \ \hat{W}_t=(\hat{W}_t^1,\hat{W}_t^2,\cdots, \hat{W}_t^N),\\
& R_t:= \e^{\int_0^{t}\sum_{i=1}^N\<\gamma_r^i, \d W^i _r\> -\ff 1 2 \int_0^{t} \sum_{i=1}^N|\gamma_r^i|^2\d r},\\
& \Q_t^{B,0}:= R_t\P^{B,0}, \ \ t\in [0,t_0].
\end{split}\end{equation*}
Since $b$ is bounded, we can apply Girsanov's theorem to conclude that
$\{\hat{W}_{t}\}_{t\in[0,t_0]}$ is an $(N\times d_{W})$-dimensional Brownian motion under the weighted conditional probability $\Q^{B,0}_{t_0}$.
So, we have
\begin{align*}
&\L^{\Q^{B,0}_{t_0}}_{(\{\bar{X}^i_t\}_{1\leq i\leq N},B_t)_{t\in[0,t_0]}}=\L^{\P^{B,0}}_{(\{X_t^{i,N}\}_{1\leq i\leq N},B_t)_{t\in[0,t_0]}},\\
&\L^{\Q^{B,0}_{t_0}}_{(\{\bar{X}^i_t\}_{1\leq i\leq N},B_t)_{t\in[0,t_0]}|\F_T^B}=\L^{\P^{B,0}}_{(\{X_t^{i,N}\}_{1\leq i\leq N},B_t)_{t\in[0,t_0]}|\F_T^B}.
\end{align*}
This gives
\begin{align*}&\E_{\Q^{B,0}_{t_0}}\int_0^{t_0}\W_2(\mu_t,\frac{1}{N}\sum_{i=1}^N\delta_{\bar{X}_t^i})^2\d t=\E^{B,0}\int_0^{t_0}\W_2(\mu_t,\frac{1}{N}\sum_{i=1}^N\delta_{X_t^{i,N}})^2\d t,
\end{align*}
which together with \eqref{HTL00}, $\lambda\leq \sigma\sigma^\ast\leq \lambda^{-1}$ and Young's inequality implies that
\beg{align}\label{DDT} \nonumber&\E^{B,0} \log F(X_{t_0}^{1,N},X_{t_0}^{2,N},\cdots, X_{t_0}^{N,N})\\
\nonumber&\le \log \E^{B,0} [ F(\bar{X}_{t_0}^{1},\bar{X}_{t_0}^{2},\cdots,\bar{X}_{t_0}^{N})]+\E^{B,0}(R_{t_0}\log R_{t_0})\\
&\le \log \E^{B,0} [ F(\bar{X}_{t_0}^{1},\bar{X}_{t_0}^{2},\cdots,\bar{X}_{t_0}^{N})]+\frac{1}{2}\sum_{i=1}^N\E_{\Q^{B,0}_{t_0}}\int_0^{t_0}|\gamma^i_t|^2\d t\\
\nonumber&\leq \log \E^{B,0} [ F(\bar{X}_{t_0}^{1},\bar{X}_{t_0}^{2},\cdots,\bar{X}_{t_0}^{N})]\\
\nonumber &+ c_1N\E^{B,0}\int_0^{t_0}\W_2(\mu_t,\frac{1}{N}\sum_{i=1}^N\delta_{X_t^{i,N}})^2\d t, \ \ 0< F\in \B_b((\R^d)^N).\end{align}
for some constant $c_1>0$.
On the other hand,
let $$b_t^{\mu,B}(x)=b_t\left(x+\int_0^t\tilde{\sigma}_s\d B_s,\mu_t\right),\ \ \sigma_t^B(x)=\sigma_t\left(x+\int_0^t\tilde{\sigma}_s\d B_s\right), \ \ t\in[0,T], x\in\R^d.$$
Consider
\begin{align*}
\d Y_t^{i}=b_t^{\mu,B}(Y_t^{i})+\sigma_t(Y_t^{i})\d W_t^i,\ \ 1\leq i\leq N, Y_0^i=X_0^i,
\end{align*}
and
\begin{align*}
\d \bar{Y}_t^{i}=b_t^{\mu,B}(\bar{Y}_t^{i})+\sigma_t(\bar{Y}_t^{i})\d W_t^i,\ \ 1\leq i\leq N, \bar{Y}_0^i=X_0^{i,N}.
\end{align*}
By \cite[Theorem 3.4.1]{Wbook} and \eqref{HTL00}, for large enough $p>1$ , we get Wang's Harnack inequality with power $p$ for some constant $c(p)>0$:
\begin{align*}&\left(\E^{B,0} [\bar{F}(\bar{Y}_t^{1},\bar{Y}_t^{2},\cdots,\bar{Y}_t^{N})]\right)^p
\leq \E^{B,0} [\bar{F}(Y_t^{1},Y_t^{2},\cdots,Y_t^{N})^p]\\
&\qquad\quad\quad\qquad\quad\times \exp\left\{\frac{c(p)\sum_{i=1}^N|X_0^{i,N}-X_0^{i}|^2}{1-\e^{-(K^2+2K)t}}\right\}, \ \ \bar{F}\in \scr B_b^+((\R^d)^N),t\in(0,T].
\end{align*}
In view of $\bar{X}_t^i=\bar{Y}_t^i+\int_0^t\tilde{\sigma}_s\d B_s$ and $X_t^i=Y_t^i+\int_0^t\tilde{\sigma}_s\d B_s$,
we conclude that
\begin{align*}&\left(\E^{B,0} [F(\bar{X}_t^{1},\bar{X}_t^{2},\cdots,\bar{X}_t^{N})]\right)^p\leq \E^{B,0} [F(X_t^{1},X_t^{2},\cdots,X_t^{N})^p]\\
&\qquad\quad\quad\qquad\quad\times \exp\left\{\frac{c(p)\sum_{i=1}^N|X_0^{i,N}-X_0^{i}|^2}{1-\e^{-(K^2+2K)t}}\right\}, \ \ F\in \scr B^+_b((\R^d)^N),t\in(0,T].
\end{align*}
This together with \eqref{DDT}, \cite[Theorem 1.4.2(1)-(2)]{Wbook} and \cite[Lemma 2.1]{23RW} implies that
\begin{align}\label{DDG}
\nonumber&\E^{B,0} \log F(X_{t_0}^{1,N},X_{t_0}^{2,N},\cdots, X_{t_0}^{N,N})\\
&\leq  \log \E^{B,0} [ F(X_{t_0}^{1},X_{t_0}^{2},\cdots,X_{t_0}^{N})]+ c_1pN\E^{B,0}\int_0^{t_0}\W_2(\mu_t,\frac{1}{N}\sum_{i=1}^N\delta_{X_t^{i,N}})^2\d t\\
\nonumber&+\frac{c(p)\sum_{i=1}^N|X_0^{i,N}-X_0^{i}|^2}{1-\e^{-(K^2+2K)t_0}}, \ \ 0< F\in \B_b((\R^d)^N).
\end{align}
Taking expectation with respect to $\E^B$ on both sides and using Jensen's inequality, we derive
\begin{align}\label{ENP}
\nonumber&\E^{B} \log F(X_{t_0}^{1,N},X_{t_0}^{2,N},\cdots, X_{t_0}^{N,N})\\
&\leq  \log \E^{B} [ F(X_{t_0}^{1},X_{t_0}^{2},\cdots,X_{t_0}^{N})]+ c_1pN\E^{B}\int_0^{t_0}\W_2(\mu_t,\frac{1}{N}\sum_{i=1}^N\delta_{X_t^{i,N}})^2\d t\\
\nonumber&+\frac{c(p)\sum_{i=1}^N\E|X_0^{i,N}-X_0^{i}|^2}{1-\e^{-(K^2+2K)t_0}}, \ \ 0< F\in \B_b((\R^d)^N).
\end{align}
For any $1\leq k\leq N$ and $0< f\in \B_b((\R^d)^k)$, take $$F_f(x_1,x_2,\cdots,x_{\lfloor\frac{N}{k}\rfloor k})=\prod_{i=0}^{\lfloor \frac{N}{k}\rfloor-1}f(x_{ik+1},x_{ik+2},\cdots,x_{ik+k}),$$
where $\lfloor \frac{N}{k}\rfloor$ stands for the integer part of $\frac{N}{k}$.
Since $(X_{t_0}^{1,N},X_{t_0}^{2,N},\cdots,X_{t_0}^{N,N})$ is exchangeable and $X_{t_0}^{1},X_{t_0}^{2},\cdots,X_{t_0}^{N}$ are i.i.d. under $\P^B$ and $\lfloor \frac{N}{k}\rfloor^{-1}\leq \frac{2k}{N},1\leq k\leq N$, we derive from \eqref{ENP} for $F=F_f$ that
\beg{align}\label{DDF}\nonumber &\E^{B} \log f(X_{t_0}^{1,N},X_{t_0}^{2,N},\cdots, X_{t_0}^{k,N})\\
&\le \log \E^{B} [ f(X_{t_0}^{1},X_{t_0}^{2},\cdots,X_{t_0}^{k})]+\frac{2c(p)k}{1-\e^{-(K^2+2K)t_0}}\frac{1}{N}\sum_{i=1}^N\E|X_0^{i,N}-X_0^{i}|^2\\
\nonumber &+ 2c_1pk\E^{B}\int_0^{t_0}\W_2(\mu_t,\frac{1}{N}\sum_{i=1}^N\delta_{X_t^{i,N}})^2\d t, \ \ 0< f\in \B_b((\R^d)^k).\end{align}
Again using \cite[Theorem 1.4.2(2)]{Wbook}, we derive \eqref{VEN} from \eqref{S1} and \eqref{DDF}.

{\bf(Step (ii))} In general, let $b^{(n)}=(-n\vee b^i\wedge n)_{1\leq i\leq d}, n\geq 1$. Noting that \eqref{HTL00} holds for $b^{(n)}$ in place of $b$, \eqref{VEN} follows from {\bf Step (i)} and an approximation technique.
\end{proof}
\begin{rem} (1) Note that in the present case, the coefficients are only assumed to be Lipschitz continuous in $\W_2$-distance with respect to the measure variable so that \cite[Theorem 1]{FG} for $p=2$ is used to estimate the convergence rate of conditional propagation of chaos, which depends on the dimension $d$ and seems a little complicated. One can also refer to \cite[Theorem 2.12]{CD} for the case $q>4$ and $X_0^{i,N}=X_0^i,1\leq i\leq N$. However, if we only consider the special case:
 $$b_t(x,\mu)=\int_{\R^d}\tilde{b}_t(x,y)\mu(\d y),\ \ \sigma_t(x,\mu)=\int_{\R^d}\tilde{\sigma}_t(x,y)\mu(\d y)$$
 for some Lipschitz continuous functions $\tilde{b},\tilde{\sigma}$ uniformly in time variable $t$,
 the convergence rate in Theorem \ref{POC} and Theorem \ref{SHS} below can be improved to be $R_{d,q}(N)=\frac{1}{N}$ and $q=2$.

 (2) In Theorem \ref{POC}(2), the coefficients before the private noise can depend on the spatial variable and the initial distribution of interacting particle system \eqref{GPS00} is allowed to be singular with that of the conditional McKean-Vlasov SDEs \eqref{GPS} since \eqref{VEN} only involves in $\frac{1}{N}\W_2(\L_{(X_{0}^{1},X_{0}^{2},\cdots, X_{0}^{N})},\L_{(X_{0}^{1,N},X_{0}^{2,N},\cdots, X_{0}^{N,N})})^2$. See also \cite{BJW,JW,JW1} for the quantitative propagation of chaos in relative entropy by the entropy method in the additive noise case and under the assumption $$\lim_{N\to\infty}\frac{\mathrm{Ent}(\L_{(X_0^{1,N},X_0^{2,N},\cdots,X_0^{N,N})}|\L_{(X_0^1,X_0^2,\cdots, X_0^N)})}{N}=0.$$
\end{rem}
\section{Conditional distribution dependent stochastic Hamiltonian system}
In this part, we consider conditional distribution dependent stochastic Hamiltonian system with additive noise, which is a type of degenerate model. More precisely, we consider
 \beq\label{E00}
\begin{cases}
\d X_t^{(1)}=\big\{AX^{(1)}_t+MX_t^{(2)}\big\}\d t, \\
\d X_t^{(2)}=b_t(X_t,\L_{X_t|\F_t^{B}})\d t+\sigma_t\d W_t+ \tt\sigma_t(\L_{X_t|\F_t^{B}})\d B_t,\ \ t\in [0,T],
\end{cases}
\end{equation} where $ X_t=(X_t^{(1)}, X_t^{(2)})$, $W_t,B_t$ are given in Section 1,
$b: [0,T]\times \R^{m+d}\times \scr P(\R^{m+d})\to \R^d,\ \ \si: [0,T]\to \R^{d}\otimes \R^{d_W}, \ \ \tt \si: [0,T]\times \scr P(\R^{m+d})\to \R^d\otimes \R^{d_B}$ are measurable and bounded on bounded sets, $A$ is an $m\times m$ matrix and $M$ is an $m\times d$ matrix.
\subsection{Log-Haranck inequality}
To establish the log-Harnack inequality, we make the following assumption.

 \begin{enumerate}
\item[\bf{(C)}] $(\sigma_t\sigma_t^\ast)^{-1}$ is bounded in $t\in[0,T]$, and there exists constants $K,\tilde{K}>0$ such that
\begin{align*}&|b_t(x,\mu)-b_t(y,\nu)| \le  K (|x-y|+\W_2(\mu, \nu)),\\
&\|\tt\si_t (\mu)-\tt\si_t (\nu)\|\leq \tilde{K} \W_2(\mu, \nu),\ \ t\in [0,T],\ x,y\in\R^d,\ \mu,\nu\in \scr P_2(\R^{m+d}).
\end{align*}
    Moreover, the following Kalman's rank condition holds for some integer $1\leq l\leq m$:
\begin{align*}\mathrm{Rank}[A^iM, 0\le i\le l-1]=m.\end{align*} \end{enumerate}

By Lemma \ref{EEB}, {\bf (C)} implies that   \eqref{E00} is   well-posed.
As in \cite{HW22+},
for any $t>0$, we consider the modified distance
$$\rr_t(x,y):= \ss{t^{-2}|x^{(1)}-y^{(1)}|^2+|x^{(2)}-y^{(2)}|^2},\ \ \ x=(x^{(1)},x^{(2)}),y=(y^{(1)},y^{(2)})\in\R^{m+d},$$
and define the associated $L^2$-Wasserstein distance
$$\W_{2,t}(\mu,\nu):=\inf_{\pi\in \C(\mu,\nu)}\bigg( \int_{\R^{m+d}\times \R^{m+d}} \rr_t(x,y)^2 \pi(\d x,\d y)\bigg)^{\ff 1 2}. $$
The next theorem characterizes the log-Harnack inequality and the proof is similar to that of \cite[Theorem 3.1]{HW22+} since $\int_0^s\tt\sigma_t(\L_{X_t|\F_t^{B}})\d B_t$ is deterministic given $B$. Hence, we will give an outline of the procedure in the following.
\begin{thm}\label{LHI} Assume {\bf (C)}.
Then there exists a constant $ c>0$ such that for any $0<f\in \B_b(\R^{m+d}), \mu_0,\nu_0 \in \scr P_2(\R^{m+d}), t\in (0,T]$ and $\xi,\tilde{\xi}\in L^2(\Omega^1\to\R^{m+d},\F^1_0,\P^1)$ with $\L_{\xi}=\mu_0, \L_{\tilde{\xi}}=\nu_0$,
\beq\label{ECt}  \beg{split}&\E\{\mathrm{Ent}(\L_{X^{\xi}_t|\F_t^B}|\L_{X^{\tilde{\xi}}_t|\F_t^B})\}
 \le \ff{c}{t^{4l-3}}\W_{2,t}(\mu_0,\nu_0)^2  \le \ff{c(1\lor T^2)}{t^{4l-1}}\W_{2}(\mu_0,\nu_0)^2,\end{split}\end{equation}
and consequently,
\begin{align}\label{EC2}  &P_t\log f(\nu_0) -   \log P_tf(\mu_0)
 \le \ff{c}{t^{4l-3}}\W_{2,t}(\mu_0,\nu_0)^2  \le \ff{c(1\lor T^2)}{t^{4l-1}}\W_{2}(\mu_0,\nu_0)^2.\end{align}
 \end{thm}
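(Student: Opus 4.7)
The plan is to combine the conditional coupling-by-change-of-measure strategy of Theorem \ref{Loh} with the Kalman-based coupling construction for hypoelliptic Hamiltonian systems developed in \cite[Theorem 3.1]{HW22+}. The observation enabling this fusion is that once we condition on $\F_T^B$, the common-noise integral $\eta_t^\nu := \int_0^t \tt\sigma_s(\nu_s)\,\d B_s$ is a deterministic (though random in $B$) function of time. Consequently, after removing $\eta_t^\nu$ from the second component and the induced displacement $f_t^\nu$ (solving $\dot f_t^\nu = A f_t^\nu + M\eta_t^\nu$) from the first component, the shifted process $\hat X_t^{\tt\xi} := X_t^{\tt\xi} - (f_t^\nu, \eta_t^\nu)^T$ satisfies a classical stochastic Hamiltonian SDE driven only by $W$, whose coefficients are random only through the $B$-path which is frozen under $\P^B$. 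The same subtraction is done for $\hat X_t^\xi$ using $(f_t^\mu,\eta_t^\mu)$.

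First I would pick $\xi,\tt\xi$ with $\L_\xi=\mu_0$, $\L_{\tt\xi}=\nu_0$ realizing $\W_{2,t_0}(\mu_0,\nu_0)^2$, set $\mu_t=\L^\P_{X_t^\xi|\F_t^B}$, $\nu_t=\L^\P_{X_t^{\tt\xi}|\F_t^B}$, and work throughout under the conditional probability $\P^B:=\P(\,\cdot\,|\F_T^B)$. Construct the coupling $\hat Y_t$ starting from $\tt\xi$ by driving it with the same $W$ but adding a drift $\beta_t$ to the second component, designed using the Kalman rank condition in \textbf{(C)} so that $\hat Y_{t_0}=\hat X_{t_0}^\xi$. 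The standard Kalman construction combined with the time-weighted distance $\rho_t$ produces a control $\beta$ with $\int_0^{t_0}|\beta_s|^2\d s$ of order $t_0^{-(4l-3)}\rho_{t_0}(\xi,\tt\xi)^2$, up to a perturbation involving $\int_0^{t_0}\W_2(\mu_s,\nu_s)^2\d s$ (from the nonlinear drift $b_t(\cdot,\mu_t)$ vs.~$b_t(\cdot,\nu_t)$) and $\int_0^{t_0}|\eta_s^\mu-\eta_s^\nu|^2\d s$ (from the mismatch of the two $\eta$-subtractions).

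Second, I would apply Girsanov under $\P^B$ with density $R_t=\exp(\int_0^t\<\beta_s,\d W_s\>-\tfrac12\int_0^t|\beta_s|^2\d s)$ and set $\Q_{t_0}^B:=R_{t_0}\P^B$. Under $\Q_{t_0}^B$, $\tilde W_t:=W_t-\int_0^t\beta_s\d s$ is a Brownian motion; weak uniqueness of the shifted random-coefficient Hamiltonian SDE under \textbf{(C)} yields $\L^{\Q_{t_0}^B}_{Y_{t_0}}=\L^{\P^B}_{X_{t_0}^{\tt\xi}}$ while $Y_{t_0}=X_{t_0}^\xi$, $\Q_{t_0}^B$-a.s. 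The Young inequality then gives $\E^B\log f(X_{t_0}^{\tt\xi})\le\log\E^B f(X_{t_0}^\xi)+\tfrac12\E_{\Q_{t_0}^B}\int_0^{t_0}|\beta_s|^2\d s$. Using Lemma \ref{EEB} applied to \eqref{E00} to estimate $\E\int_0^{t_0}\W_2(\mu_s,\nu_s)^2\d s$ by $\W_2(\mu_0,\nu_0)^2$ and the Lipschitz property of $\tt\sigma$ in \textbf{(C)} together with BDG to bound $\E|\eta_s^\mu-\eta_s^\nu|^2$, then taking expectation in $B$, yields \eqref{ECt} via \cite[Theorem 1.4.2]{Wbook}. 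The log-Harnack \eqref{EC2} follows from the same argument in the style of Theorem \ref{Loh}, and the alternative bound in terms of $\W_2$ comes from the crude comparison $\rho_t(x,y)^2\le (t^{-2}\vee 1)|x-y|^2$ which costs an extra factor $t^{-2}$.

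The principal obstacle I expect is making the Kalman coupling construction work uniformly in $B$ given that the two shifted systems involve different deterministic perturbations $(f^\mu,\eta^\mu)$ vs.~$(f^\nu,\eta^\nu)$, and verifying that the perturbation contributions from $\W_2(\mu_s,\nu_s)$ and $|\eta_s^\mu-\eta_s^\nu|$ integrate against the Kalman weights to something compatible with the sharp $t^{-(4l-3)}$ rate rather than degrading it. Once this accounting is settled, the coupling and Girsanov machinery essentially mirror the structure of the proof of Theorem \ref{Loh}, and the degenerate structure is handled exactly as in \cite{HW22+}.
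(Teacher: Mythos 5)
Your roadmap matches the paper's proof in all essential respects: a deterministic (given the common noise and the initial data) Kalman control forces the coupled process to hit the target at time $t_0$, Girsanov under a conditional probability replaces the drift, weak uniqueness identifies the law of $Y_{t_0}$ under $\Q^B$ with that of $X_{t_0}^{\tilde\xi}$, and the Young inequality plus Lemma \ref{EEB} and the BDG bound on $\eta^\mu-\eta^\nu$ close the estimate. The cost accounting you anticipate ($t_0^{-(4l-3)}\rho_{t_0}^2$ plus perturbation terms of lower order) is precisely what \cite[(3.17)]{HW22+} delivers, and your comparison $\rho_t^2\le (1\vee T^2)t^{-2}|\cdot|^2$ gives the second inequality.

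There is one inconsistency in the setup, which you yourself flag as the principal obstacle but do not resolve. You propose shifting $X^{\tilde\xi}$ by $(f^\nu,\eta^\nu)$ and $X^\xi$ by $(f^\mu,\eta^\mu)$, then demanding $\hat Y_{t_0}=\hat X_{t_0}^\xi$. But with these asymmetric shifts, $\hat Y_{t_0}=\hat X_{t_0}^\xi$ does \emph{not} give $Y_{t_0}=X_{t_0}^\xi$; the two endpoints differ by $(f^\mu_{t_0}-f^\nu_{t_0},\ \eta^\mu_{t_0}-\eta^\nu_{t_0})$, and this discrepancy does not vanish because $\mu_t\neq\nu_t$ in general. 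The paper avoids this: it never does a two-sided shift. The process $Y$ in \eqref{A2} is driven by the $\mu$-drift $b_t(X_t^{\mu_0},\mu_t)$ plus the deterministic control $\alpha_{t_0}'(t)$, with the $\nu$-level common-noise integral $\tilde\sigma_t(\nu_t)\,\d B_t$, and $\alpha_{t_0}$ is built in \eqref{A1} so that $Y_{t_0}=X_{t_0}^{\mu_0}$ holds exactly on the \emph{unshifted} level; the quantity $V_{t_0}^{\mu,\nu}$ in the control is exactly where the $\eta^\mu-\eta^\nu$ mismatch gets absorbed. The single shift $\hat Y_t:=Y_t-(0,\eta_t^\nu)$ is used only afterward, to invoke weak uniqueness against the similarly-shifted $\hat X_t^{\nu_0}$; since both are shifted by the same $\eta^\nu$, no mismatch arises there. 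A second, lesser point: the paper first conditions on $\F_T^B\vee\F_0$ (so the control and the right-hand side of \eqref{AP5} are constants under $\E^{B,0}$), obtains the Young inequality at that level, and then passes to $\E^B$ by Jensen; working directly under $\E^B$ as you propose should also succeed but requires a little more care in justifying that the $\F_0$-measurable control is admissible for Girsanov and in carrying out the weak-uniqueness argument with random initial data.
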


 \begin{proof}
For any $t_0\in (0,T]$ and $\mu_0,\nu_0\in \scr P_2(\R^{m+d})$, let $X_0^{\mu_0},X_0^{\nu_0}$ be $(\Omega^1,\F_0^1)$-measurable such that
 \beq\label{0'} \L_{X_0^{\mu_0}}=\mu_0,\ \ \L_{X_0^{\nu_0}}=\nu_0,\ \ \E[\rr_{t_0}(X_0^{\mu_0},X_0^{\nu_0})^2]=\W_{2,t_0}(\mu_0,\nu_0)^2.\end{equation}
Let $X_t^{\mu_0}$ and $X_t^{\nu_0}$ solve \eqref{E00} with initial value $X_0^{\mu_0}$ and $X_0^{\nu_0}$ respectively and let $\mu_t,\nu_t, \eta^\mu$ be in \eqref{mnu} and \eqref{XM}.
Then \eqref{-2} and \eqref{etn} still hold.
 For fixed $t_0\in (0,T],$ let
\beq\label{A1}\beg{split} &Q_{t} :=\int_0^{t} \ff{s(t-s)}{t^2}\e^{-sA}MM^*\e^{-sA^*}\d s,\ \ t\in[0,t_0]\\
&v=X_0^{\nu_0}-X_0^{\mu_0},\\
&V_{t_0}^{\mu,\nu}:= \int_0^{t_0}\e^{-rA}M\Big\{ \ff{t_0-r}{t_0} v^{(2)} +\ff r {t_0}\big(\eta_{t_0}^\mu-\eta_{t_0}^\nu\big) +\eta_r^\nu-
\eta_r^\mu\Big\}\d r,\\
  &\aa_{t_0}(s) := \ff{s}{t_0} \big(\eta_{t_0}^\mu-\eta_{t_0}^\nu -v^{(2)}\big)\\
  &\quad\quad\qquad- \ff{s(t_0-s)}{t_0^2} M^*\e^{-sA^*}Q_{t_0}^{-1}\big(v^{(1)}+ V_{t_0}^{\mu,\nu}\big),\ \ s\in[0,t_0].\end{split} \end{equation}
Denote $Y_t=(Y_t^{(1)},Y_t^{(2)})$ the solution to the SDE:
\beq\label{A2} \begin{cases}
\d Y_t^{(1)}=\big\{AY_t^{(1)}  +MY_t^{(2)}\big\}\d t, \\
\d Y_t^{(2)}=\big\{b_t(X^{\mu_0}_t,\mu_t)+  \aa_{t_0}'(t) \big\}\d t+\sigma_t\d W_t+\tt\si_t(\nu_t)\d B_t,\ \ Y_0=X_0^{\nu_0}.
    \end{cases}\end{equation}
which combined with \eqref{A1} yields $Y_{t_0}=X^{\mu_0}_{t_0}$.
Let
\begin{equation*}\gamma_s := \sigma_s^{\ast}(\sigma_s\sigma^\ast_s)^{-1}\big\{b_s(Y_s, \nu_s)- b_s(X_s^{\mu_0}, \mu_s)  -  \aa_{t_0}'(s)\big\},\ \ s\in [0,t_0].\end{equation*}
By {\bf (C)} and \cite[(3.17)]{HW22+}, there exists a constant $c_1>0$ uniformly in $t_0\in (0,T]$  such that
\beq\label{AP5}  \beg{split}  |\gamma_s |^2\le &\,c_1
\Big\{\W_2(\mu_s,\nu_s)^2  + t_0^{4(1-l)}   \rr_{t_0}(X_0^{\mu_0},X_0^{\nu_0})^2   + t_0^{4(1-l)} \sup_{t\in [0,t_0]} |\eta_t^\mu-\eta_t^\nu|^2\Big\}\\
&  +  c_1 t_0^{2-4l}\Big(\rr_{t_0}(X_0^{\mu_0},X_0^{\nu_0})^2+\sup_{t\in [0,t_0]} |\eta_t^\nu-\eta_t^\mu|^2\Big), \ \ s\in[0,t_0].\end{split}\end{equation}
Recall that $\P^{B,0}$ is defined in \eqref{PT0}. By Girsanov's theorem,
$$\hat{W}_t:=W_t- \int_0^t \gamma_s \d s,\ \ t\in [0,t_0]$$
is a $d_W$-dimensional Brownian motion under the weighted  conditional probability measure $\d \Q^{B,0}:=R \d\P^{B,0}$, where
$$R:= \e^{\int_0^{t_0} \<\gamma_s,\d W_s\>-\ff 1 2 \int_0^{t_0} |\gamma_s|^2\d s}.$$
 By  \eqref{A2}, $\hat Y_t:= Y_t-(0,\eta_t^\nu)$ solves the SDE
$$\begin{cases}
\d \hat Y_t^{(1)}=\big\{A\hat Y_t^{(1)}  +M\hat Y_t^{(2)}+M\eta_t^\nu\big\}\d t, \\
\d \hat Y_t^{(2)}= b_t(\hat Y_t+(0,\eta_t^\nu),\nu_t) \d t+\sigma_t\d \hat{W}_t,\ \ t\in [0,t_0],\ \hat Y_0=Y_0.
\end{cases}$$
Observe that $\hat X_t^\nu:= X_t^\nu-(0,\eta_t^\nu)$  solves the same  equation  as $\hat{Y}_t$  for $W_t$ replacing $\hat{W}_t$.
By the weak uniqueness and  the fact that $\eta_t^\nu$ is $\F_T^B$-measurable, we get
$$\L^{\Q^{B,0}}_{Y_{t_0}}= \L^{\Q^{B,0}}_{\hat Y_{t_0} +(0,\eta_{t_0}^\nu)} = \L^{\P^{B,0}}_{\hat X_{t_0}^{\nu_0} +(0,\eta_{t_0}^\nu)} = \L^{\P^{B,0}}_{X_{t_0}^{\nu_0}}.$$
This together with $Y_{t_0}=X^{\mu_0}_{t_0}$, Young's inequality and \eqref{AP5} yields that  we find  some constant $c_2>0$ such that for any $0<f\in \B_b(\R^{m+d})$,
\begin{equation*} \beg{split} &\E^{B,0} [\log f(X_{t_0}^{\nu_0})]=\E^{B,0} [R \log f(Y_{t_0})]=\E^{B,0} [R \log f(X_{t_0}^{\mu_0})]\\
&\leq \log \E^{B,0} [f(X_{t_0}^{\mu_0})] + \E^{B,0} [R\log R]\\
 &= \log \E^{B,0} [f(X_{t_0}^{\mu_0})]+\ff 1 2\E_{\Q^{B,0}} \int_0^{t_0}|\gamma_t|^2\d t\\
&\le\log \E^{B,0} [f(X_{t_0}^{\mu_0})]\\
&\quad+ c_2 \Big\{t_0\sup_{s\in[0,t_0]}\W_2(\mu_s,\nu_s)^2  +  t_0^{3-4l}   \rr_{t_0}(X_0^{\mu_0},X_0^{\nu_0})^2+ t_0^{3-4l}\sup_{t\in [0,t_0]} |\eta_t^\nu-\eta_t^\mu|^2\Big\}.  \end{split} \end{equation*}
By taking expectation with respect to $\P^B$, using Jensen's inequality,
we obtain
\begin{equation*} \beg{split} &\E^{B} [\log f(X_{t_0}^{\nu_0})]\le\log \E^{B} [f(X_{t_0}^{\mu_0})]\\
&\quad+ c_2 \Big\{t_0\sup_{s\in[0,t_0]}\W_2(\mu_s,\nu_s)^2  +  t_0^{3-4l}   \E \rr_{t_0}(X_0^{\mu_0},X_0^{\nu_0})^2+ t_0^{3-4l}\sup_{t\in [0,t_0]} |\eta_t^\nu-\eta_t^\mu|^2\Big\}.  \end{split} \end{equation*}
Then by Lemma \ref{EEB}, \eqref{-2}, \eqref{etn}, \cite[(3.5)]{HW22+}, \cite[Theorem 1.4.2(2)]{Wbook} and \eqref{0'},  we prove \eqref{ECt}, which gives \eqref{EC2} by \eqref{eng} and \cite[Theorem 1.4.2(2)]{Wbook}.
  \end{proof}
\subsection{Conditional propagation of chaos}
Let $N$ be a positive integer and $(X_0^i,W^i_t)_{1\le i\le N}$, $(X_0^{i,N})_{1\leq i\leq N}$ and $B_t$ be defined in the same way as in Section 2.2 with $m+d$ replacing $d$.
Let $X_t^i=(X_t^{i,(1)},X_t^{i,(2)})$ solve the conditional distribution dependent stochastic Hamiltonian system:
 \beq\label{E01}
\begin{cases}
\d X_t^{i,(1)}=\big\{AX^{i,(1)}_t+MX_t^{i,(2)}\big\}\d t, \\
\d X_t^{i,(2)}=b_t(X_t^i,\L_{X_t^i|\F_t^{B}})\d t+\sigma_t\d W_t^i+ \tt\sigma_t\d B_t,\ \ t\in [0,T],
\end{cases}
\end{equation}
and consider the mean field interacting particle system with common noise:
 \beq\label{E02}
\begin{cases}
\d X_t^{i,N,(1)}=\big\{AX_t^{i,N,(1)}+MX_t^{i,N,(2)}\big\}\d t, \\
\d X_t^{i,N,(2)}=b_t(X_t^{i,N},\frac{1}{N}\sum_{i=1}^N\delta_{X_t^{i,N}})\d t+\sigma_t\d W_t^i+ \tt\sigma_t\d B_t,\ \ t\in [0,T],
\end{cases}
\end{equation}
here $X_t^{i,N}=(X_t^{i,N,(1)}, X_t^{i,N,(2)})$.  Recall that $R_{d,q}(N)$ is defined in Theorem \ref{POC}.
\begin{thm}\label{SHS} Assume {\bf(C)} and $\E|X_0^1|^{q}<\infty$ for some $q>2$.
Then there exists a constant $C>0$ depending only on $d,T$ and $\E|X_0^1|^{q}$ such that for any $k\geq 1$ and $k\leq N$, it holds
\begin{align}\label{VVN}
&\E\mathrm{Ent}(\L^{\P}_{(X_{t}^{1,N},X_{t}^{2,N},\cdots, X_{t}^{k,N})|\F_t^B}|\L^{\P}_{(X_{t}^{1},X_{t}^{2},\cdots,X_{t}^{k})|\F_t^B})\\
\nonumber&\leq CkR_{d,q}(N)+\frac{C}{t^{4l-1}}\frac{k}{N}\W_2(\L_{(X_{0}^{1},X_{0}^{2},\cdots, X_{0}^{N})},\L_{(X_{0}^{1,N},X_{0}^{2,N},\cdots, X_{0}^{N,N})})^2,\ \ t\in(0,T].
\end{align}
\end{thm}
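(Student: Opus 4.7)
The plan is to follow the scheme of Theorem~\ref{POC}(2), with the non-degenerate Wang--Harnack inequality with power replaced by its degenerate counterpart for stochastic Hamiltonian systems, so that the factor $(1-\e^{-(K^2+2K)t})^{-1}$ becomes $t^{-(4l-1)}$ in accordance with Theorem~\ref{LHI}.

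First, I would reduce to the case where $b$ is bounded by the truncation $b^{(n)}=(-n\vee b^i\wedge n)_{1\le i\le d}$, as in Step~(ii) of the proof of Theorem~\ref{POC}(2). Under this boundedness assumption, let $\mu_t=\L^{\P}_{X_t^i|\F_t^B}$ (independent of $i$ by Lemma~\ref{EEB}), and introduce the auxiliary process $\bar X_t^i$ solving the stochastic Hamiltonian system~\eqref{E00} with drift $b_t(\cdot,\mu_t)$ and initial value $\bar X_0^i=X_0^{i,N}$. A Girsanov transform using the shift
\[
\gamma_t^i:=\si_t^\ast(\si_t\si_t^\ast)^{-1}\Bigl\{b_t\Bigl(\bar X_t^i,\tfrac1N\sum_{j=1}^N\dd_{\bar X_t^j}\Bigr)-b_t(\bar X_t^i,\mu_t)\Bigr\}
\]
(legitimate since $\sigma_t^\ast(\sigma_t\sigma_t^\ast)^{-1}$ is bounded and $b$ is Lipschitz under \textbf{(C)}) turns $\bar X^i$ into a weak solution of \eqref{E02}. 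Mimicking \eqref{DDT} with the conditional probability $\P^{B,0}$ in~\eqref{PT0} and the corresponding density $R_{t_0}$, this yields
\[
\E^{B,0}\log F(X_{t_0}^{1,N},\ldots,X_{t_0}^{N,N})\le \log\E^{B,0}F(\bar X_{t_0}^1,\ldots,\bar X_{t_0}^N)+c_1N\E^{B,0}\int_0^{t_0}\W_2\Bigl(\mu_t,\tfrac1N\sum_{j=1}^N\dd_{X_t^{j,N}}\Bigr)^2\d t
\]
for any $0<F\in\B_b((\R^{m+d})^N)$.

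Second, I would compare $\L^{\P^{B,0}}_{(\bar X^i_{t_0})_{i\le N}}$ with $\L^{\P^{B,0}}_{(X^i_{t_0})_{i\le N}}$ via Wang's Harnack inequality with power for the stochastic Hamiltonian system. Since $\int_0^t\tt\si_s(\mu_s)\d B_s$ is $\F_T^B$-measurable, after the substitution $\hat X_t^i:=X_t^i-(0,\eta^\mu_t)$ (as in the proof of Theorem~\ref{LHI}) the common-noise drift disappears and one is reduced to a classical degenerate Kolmogorov-type SDE with shifted drift $b_t(\cdot+(0,\eta^\mu_t),\mu_t)$. The coupling-by-change-of-measure of Theorem~\ref{LHI} (with $\W_2$ terms absent because initial data are the only source of discrepancy) produces, for $p>1$ large enough,
\[
\bigl(\E^{B,0}F(\bar X_{t_0}^1,\ldots,\bar X_{t_0}^N)\bigr)^p\le \E^{B,0}F(X_{t_0}^1,\ldots,X_{t_0}^N)^p\cdot\exp\Bigl\{\ff{c(p)\sum_{i=1}^N|X_0^{i,N}-X_0^i|^2}{t_0^{4l-1}}\Bigr\},
\]
by arguing as in \cite[Theorem 3.4.1]{Wbook} or \cite[Theorem 3.1]{HW22+}. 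Combining with the previous display, applying the entropy inequality \cite[Lemma~2.1]{23RW}, and taking $\E^B$ via Jensen gives the analogue of \eqref{ENP} with $t_0^{-(4l-1)}$ in place of $(1-\e^{-(K^2+2K)t_0})^{-1}$.

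Third, I would exploit exchangeability of $\{X^{i,N}\}_{1\le i\le N}$ and the conditional i.i.d.\ property of $\{X^i\}_{1\le i\le N}$ under $\P^B$ through the factorization
\[
F_f(x_1,\ldots,x_{\lfloor N/k\rfloor k})=\prod_{i=0}^{\lfloor N/k\rfloor-1}f(x_{ik+1},\ldots,x_{ik+k}),
\]
with $\lfloor N/k\rfloor^{-1}\le 2k/N$, exactly as in \eqref{DDF}. To close the loop, I would bound $\E^B\int_0^{t_0}\W_2(\hat\mu^N_t,\mu_t)^2\d t$ by $C_1R_{d,q}(N)+\tfrac{C_1}{N}\sum_i\E|X_0^{i,N}-X_0^i|^2$ using Fournier--Guillin \cite{FG} and the synchronous-coupling estimate \eqref{LEW00} (which carry over verbatim since the first component only adds a Lipschitz linear transport and $\tt\si$ is distribution-free, so both arguments reduce to estimates on $\sum_i\E^B|X^{i,N}_s-X^i_s|^2$). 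The main obstacle is verifying the degenerate Wang--Harnack inequality with power in this conditional setting with time-dependent $B$-random drift; the key simplification is that conditionally on $\F_T^B$ the noise $B$ is frozen, so the coupling $\alpha_{t_0}$, $V^{\mu,\nu}_{t_0}$ already constructed in~\eqref{A1} applies directly and the Kalman rank condition delivers the $t_0^{2-4l}$ scaling in the Girsanov density. Finally, the truncation approximation concludes \eqref{VVN}.
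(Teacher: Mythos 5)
Your proposal is correct in substance but takes a genuinely different route from the paper. You transplant the two-step scheme of Theorem \ref{POC}(2) verbatim: (a) a Girsanov change of measure passes from the auxiliary system $\bar X^i$ (drift $b_t(\cdot,\mu_t)$, started at $X_0^{i,N}$) to the particle system, and (b) a separate Wang--Harnack inequality with power absorbs the initial-data shift $X_0^{i,N}\to X_0^i$, the two ingredients being glued together by the entropy inequality of \cite[Lemma 2.1]{23RW}. The paper instead builds a \emph{single} auxiliary process $Y^{i,N}$ in \eqref{GPY} whose drift already contains the deterministic Kolmogorov-type correction $(\aa^i_{t_0})'$: this process starts at $X_0^{i,N}$, terminates at $X_{t_0}^i$ (see \eqref{DDX}), and becomes a weak solution of \eqref{E02} after one Girsanov transform. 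In that way the initial-data shift and the drift exchange are handled by a single change of measure, so neither the power-Harnack inequality nor \cite[Lemma 2.1]{23RW} is invoked; the estimate \eqref{KKS} on $\int_0^{t_0}|\gamma^{i,N}_s|^2\d s$ delivers the $t_0^{-(4l-1)}$ scaling and the $\W_2(\hat\mu^N_t,\mu_t)^2$ term directly, and the rest goes through as in your plan (exchangeability factorization, Fournier--Guillin bound, truncation). The paper's one-coupling route is slicker and avoids introducing an extra parameter $p$; your two-coupling route is more mechanical and has the advantage of separating cleanly the ``propagation'' error from the ``initial discrepancy'' error.

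One imprecision: your power-Harnack step appeals to \cite[Theorem 3.4.1]{Wbook}, which is stated for non-degenerate diffusions, whereas here the diffusion is degenerate. The statement you need is the power-Harnack inequality for stochastic Hamiltonian systems with additive noise under the Kalman rank condition. It does hold and is easy in this setting: since the noise in the second component is additive and the first component has no noise, the difference $Y_s-X_s$ of the coupled processes is deterministic (given the initial data and $B$), hence $|\gamma_s|$ in the Girsanov density is deterministically bounded by $c\,t_0^{1-2l}\,\rr_{t_0}(X_0^{i,N},X_0^i)$, and all $L^p$-moments of $R$ are controlled, yielding the exponent $c(p)\,t_0^{-(4l-1)}\sum_i|X_0^{i,N}-X_0^i|^2$ you quote. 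You should either cite a genuinely degenerate power-Harnack reference (e.g.\ the stochastic Hamiltonian chapter of \cite{Wbook}, or adapt \cite[Theorem 3.1]{HW22+}) or give the one-line argument above; otherwise the logic is sound.
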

\begin{proof} Since {\bf(C)} implies \eqref{HTL00}, \eqref{S1} holds for \eqref{E01}-\eqref{E02} in place of \eqref{GPS}-\eqref{GPS00}. We first assume that $b$ is bounded. Fix $t_0\in(0,T]$. Let $Q_t$ be defined in \eqref{A1} and for $s\in[0,t_0]$,
\begin{equation*}\beg{split}
  &\aa_{t_0}^{i}(s) := -\ff{s}{t_0} (X_0^{i,N,(2)}-X_0^{i,(2)})\\
&\quad- \ff{s(t_0-s)}{t_0^2} M^*\e^{-sA^*}Q_{t_0}^{-1}\bigg((X_0^{i,N,(1)}-X_0^{i,(1)})\\
&\qquad\qquad\qquad\qquad\qquad\qquad\quad+ \int_0^{t_0}\e^{-rA}M\ff{t_0-r}{t_0} (X_0^{i,N,(2)}-X_0^{i,(2)})^{} \d r\bigg).\end{split} \end{equation*}
Again set $\mu_t=\mu_t^i=\L_{X_t^i|\F_t^{B}}$. Construct $Y_t^{i,N}=(Y_t^{i,N,(1)},Y_t^{i,N,(2)})$ as
 \beq\label{GPY}
\begin{cases}
\d Y_t^{i,N,(1)}=\big\{AY_t^{i,N,(1)}+MY_t^{i,N,(2)}\big\}\d t, \\
\d Y_t^{i,N,(2)}=[b_t(X_t^{i},\mu_t)+(\aa^i_{t_0})'(t)]\d t+\sigma_t\d W_t^i+ \tt\sigma_t\d B_t,\ \ Y_0^{i,N}=X_0^{i,N}.
\end{cases}
\end{equation}
Then we have
\begin{align*}&Y_t^{i,N,(2)}=X_t^{i,(2)}+(X_0^{i,N,(2)}-X_0^{i,(2)})+\alpha_{t_0}(t),\\
&Y_t^{i,N,(1)}=X_t^{i,(1)}+\e^{At}(X_0^{i,N,(1)}-X_0^{i,(1)})\\
&\qquad\quad+\int_0^t\e^{A(t-s)}M[(X_0^{i,N,(2)}-X_0^{i,(2)}) +\alpha_{t_0}(s)]\d s.
\end{align*}
In particular, it holds
\begin{align}\label{DDX}
Y_{t_0}^{i,N}=X_{t_0}^i,\ \ 1\leq i\leq N.
\end{align}
Let
\begin{align*}
&\d \hat{W}^i_t=\d W^i_t-\gamma_t^{i,N}\d t,\\ &\gamma_t^{i,N}=\sigma^\ast_t(\sigma_t\sigma_t^\ast)^{-1}\left[b_t(Y_t^{i,N}, \frac{1}{N}\sum_{i=1}^N\delta_{Y_t^{i,N}})-b_t(X_t^i, \mu_t)-\aa'_{t_0}(t)\right],\ \ 1\leq i\leq N.
\end{align*}
and
$$R_t=\exp\left\{\int_0^t\sum_{i-1}^N\<\gamma_s^{i,N},\d W^i_s\>-\frac{1}{2}\int_0^t\sum_{i=1}^N|\gamma_s^{i,N}|^2\d s\right\}.$$
Similarly to \eqref{AP5} and using the boundedness of $b$, there exists a constant $c_0>0$ such that
\begin{align}\label{KKS}\int_0^{t_0}|\gamma_s^{i,N}|^2\d s\leq c_0\int_0^{t_0}\left(1\wedge\W_2(\frac{1}{N}\sum_{i=1}^N\delta_{Y_t^{i,N}},\mu_t)^2\right)\d t+c_0\frac{|X_0^i-X_0^{i,N}|^2}{t_0^{4l-1}}.
\end{align}
By Girsanov's theorem, $((\hat{W}^i_t)_{t\in[0,t_0]})_{1\leq i\leq N}$ is an $(N\times d_W)$-dimensional Brownian motion under the conditional probability measure $\Q^{B,0}=R_{t_0}\P^{B,0}$.
Moreover, \eqref{GPY} can be rewritten as
\begin{equation*}
\begin{cases}
\d Y_t^{i,N,(1)}=\big\{AY_t^{i,N,(1)}+MY_t^{i,N,(2)}\big\}\d t, \\
\d Y_t^{i,N,(2)}=b_t(Y_t^{i,N},\frac{1}{N}\sum_{i=1}^N\delta_{Y_t^{i,N}})\d t+\sigma_t\d \hat{W}_t^i+ \tt\sigma_t\d B_t,\ \ Y_0^{i,N}=X_0^{i,N},t\in [0,t_0].
\end{cases}
\end{equation*}
By the weak uniqueness, it holds
$$\L^{\Q^{B,0}}_{\{Y_t^{i,N}\}_{1\leq i\leq N}}=\L^{\P^{B,0}}_{\{X_t^{i,N}\}_{1\leq i\leq N}},\ \ t\in[0,t_0].$$
This together with \eqref{DDX} implies
\begin{align*}\E^{B,0} f(X_{t_0}^{1,N},X_{t_0}^{2,N},\cdots, X_{t_0}^{N,N})&=\E^{B,0}[R_{t_0}f(Y_{t_0}^{1,N},Y_{t_0}^{2,N},\cdots, Y_{t_0}^{N,N})]\\
&=\E^{B,0}[R_{t_0}(X_{t_0}^{1},X_{t_0}^{2},\cdots, X_{t_0}^{N})],\ \ f\in \B_b((\R^{m+d})^N).
\end{align*}
Note that
$$\E_{\Q^{B,0}}\W_2(\frac{1}{N}\sum_{i=1}^N\delta_{Y_t^{i,N}},\mu_t^i)^2 =\E^{B,0}\W_2(\frac{1}{N}\sum_{i=1}^N\delta_{X_t^{i,N}},\mu_t^i)^2.$$
Combining this with Young's inequality and \eqref{KKS}, we can find a constant $c>0$ such that
\beg{align*} &\E^{B,0} \log f(X_{t_0}^{1,N},X_{t_0}^{2,N},\cdots, X_{t_0}^{N,N})\\
&\le \log \E^{B,0} [ f(X_{t_0}^{1},X_{t_0}^{2},\cdots,X_{t_0}^{N})]+\frac{1}{2}\E_{\Q^{B,0}}\int_0^{t_0}\sum_{i=1}^N|\gamma_s^{i,N}|^2\d s\\
&\le \log \E^{B,0} [ f(X_{t_0}^{1},X_{t_0}^{2},\cdots,X_{t_0}^{N})]\\
&+c\sum_{i=1}^N\E^{B,0}\int_0^{t_0}\W_2(\frac{1}{N}\sum_{i=1}^N\delta_{X_t^{i,N}},\mu_t)^2\d t+c\sum_{i=1}^N\frac{|X_0^i-X_0^{i,N}|^2}{t_0^{4l-1}}, \ \ 0< f\in \B_b((\R^{m+d})^N).\end{align*}
Repeating the same argument to derive \eqref{DDF} from \eqref{DDG}, we obtain
\beg{align*} &\E^B \log f(X_{t_0}^{1,N},X_{t_0}^{2,N},\cdots, X_{t_0}^{k,N})\\
&\le \log \E^B [ f(X_{t_0}^{1},X_{t_0}^{2},\cdots,X_{t_0}^{k})]\\
 &+ 2ck\left\{\E^B\int_0^{t_0}\W_2(\frac{1}{N}\sum_{i=1}^N\delta_{X_t^{i,N}},\mu_t)^2\d t+\frac{\sum_{i=1}^N\E|X_0^i-X_0^{i,N}|^2}{Nt_0^{4l-1}}\right\}, \ \ 0< f\in \B_b((\R^{m+d})^k).\end{align*}
 Again using \cite[Theorem 1.4.2(2)]{Wbook} and \eqref{S1}, we derive \eqref{VVN} when $b$ is bounded. Finally, by the same approximation technique in {\bf(Step (ii))} in the proof of Theorem \ref{POC}(2), we obtain the desired result for general $b$.
 \end{proof}
\section*{}
{\bf Data Availability Statement} Data sharing not applicable to this article as no datasets were generated or
analysed during the current study.
\section*{Declarations}
{\bf Conflict of Interests} The authors declare that they have no conflict of interest.

\end{document}